    \newcommand{\href}[2]{#2}
\newtheorem{theorem}{Theorem}[section]
\newtheorem{corollary}[theorem]{Corollary}
\newtheorem{lemma}[theorem]{Lemma}
\newtheorem{assumption}[theorem]{Assumption}
\newtheorem{proposition}[theorem]{Proposition}
\newtheorem{definition}[theorem]{Definition}
\newtheorem{remark}[theorem]{Remark}
\numberwithin{equation}{section}  
\definecolor{myblue}{rgb}{0.2,0.2,0.7}
\definecolor{mygreen}{rgb}{0,0.6,0}
\definecolor{mycyan}{rgb}{0,0.6,0.6}
\definecolor{myred}{rgb}{0.9,0.2,0.2}
\definecolor{mymagenta}{rgb}{0.9,0.2,0.9}
\definecolor{mywhite}{rgb}{1.0,1.0,1.0}
\definecolor{myblack}{rgb}{0.0,0.0,0.0}
\renewcommand{\div}{{\operatorname{div}}}
\renewcommand{\d}{{\operatorname{d}}}
\newcommand{\eps}{\varepsilon}
\newcommand{\R}{{\mathbb R}}       
\newcommand{\cJ}{{\mathcal J}}
\newcommand{\cK}{{\mathcal K}}
\newcommand{\cQ}{{\mathcal Q}}
\newcommand{\cT}{{\mathcal T}}
\newcommand{\cV}{{\mathcal V}}
\DeclareMathAlphabet{\mathpzc}{OT1}{pzc}{m}{it}
\newcommand{\f}{\frac}
\newcommand{\on}{\text{ on }}
\newcommand{\an}{\text{ and }}
\newcommand{\inn}{\text{ in }}
\newcommand{\tforall}{\text{ for all }}
\newcommand{\ale}{\text{a.e.\,}} 
\newcommand{\card}{\text{card}}
\newcommand{\rest}{\big|}
\newcommand{\grad}{\nabla} 
\newcommand{\tdot}{\!\cdot\!}
\newcommand{\goto}{\rightarrow}
\newcommand{\pa}{\partial}
\definecolor{blue}{rgb}{0.2,0.2,0.7}
\definecolor{red}{rgb}{0.7,0.3,0.1}
\definecolor{cyan}{rgb}{0.2,0.5,0.6}
\begin{document}

\title[Matrix analysis for comparison principles]
      {A matrix analysis approach to discrete comparison principles for 
       nonmonotone PDE}

\author[S. Pollock]{Sara Pollock}
\email{sara.pollock@wright.edu}

\author[Y. Zhu]{Yunrong Zhu}
\email{zhuyunr@isu.edu}

\address{Department of Mathematics and Statistics\\
         Wright State University\\ 
         Dayton, OH 45435}

\address{Department of Mathematics and Statistics\\
         Idaho State University\\ 
         Pocatello, ID 83209}

\thanks{SP was supported in part by NSF DMS 1719849. 
        YZ was supported in part by NSF DMS 1319110.}

\date{\today}

\keywords{
Discrete comparison principle,
uniqueness,
nonmonotone problems,
quasilinear partial differential equations,
monotone matrix,
Z-matrix,
M-matrix
}


\subjclass[2010]{65N30, 35J62}

\begin{abstract}
We consider a linear algebra approach to establishing a discrete comparison principle 
for a nonmonotone class of quasilinear elliptic partial differential equations.  
In the absence of a lower order term, we require local conditions on the mesh 
to establish the comparison principle and uniqueness of the piecewise linear 
finite element solution.
We consider the assembled matrix corresponding to the linearized problem satisfied by 
the difference of two solutions to the nonlinear problem.
Monotonicity of the assembled matrix establishes a maximum principle for the linear
problem and a comparison principle for the nonlinear problem.
The matrix analysis approach to the discrete comparison principle yields sharper
constants and more relaxed mesh conditions than does the argument by contradiction
used in previous work.

\end{abstract}

\maketitle

\section{Introduction}\label{sec:intro} 

We consider a linear algebra approach to develop a discrete comparison principle for 
the equation
\begin{align}\label{eqn:QLPDEstrong}
-\div(\kappa(x,u)\grad u) + g(x,u)= f, \inn \Omega \subset \R^2, 
\end{align}
with homogeneous Dirichlet conditions
\begin{align}\label{eqn:diriBC}
u = 0 \on \Gamma_D = \pa \Omega.
\end{align}
The discrete comparison principle 
directly implies uniqueness of the discrete solution, in agreement with
the comparison principle and uniqueness for the continuous problem.

The PDE \eqref{eqn:QLPDEstrong} is both nonmonotone and nonvariational 
(see, {\em e.g.,} \cite{HlKrMa94}); and, as demonstrated in \cite{AnCh96a}, 
uniqueness of solutions to its finite element approximation can fail if the mesh 
is too coarse, even where the PDE solution is known to be unique.
Asympototic error estimates for a finite element approximation as the meshsize 
$h \goto 0$ were first shown in 1975 in \cite{DoDu75}. 
More recently, similar results were shown to hold
under integration by quadrature in \cite{AbVi12}.
In \cite{AnCh96a}, an argument by contradiction related to the approach
used in the continuous case is used to establish a discrete comparison principle
based on the condition that the mesh partition is globally fine enough.
The current authors used similar ideas in \cite{PoZh17b,PoZh17a} to demonstrate
that a local verifiable condition based on the variance of the solution over each 
element, rather than a global meshize condition, is sufficient for uniqueness of 
solutions in the absence of a lower order term.  
Here, we improve the constant appearing in the
condition and also relax the angle condition on the mesh.

This manuscript is motivated by the linear algebra approach used to establish 
a discrete maximum principle in \cite{BrKoKr08}, demonstrated to improve previously 
established constants.  The current authors also showed in  \cite{PoZh17b} 
that the maximum principle 
for the linear reaction-diffusion equation developed in \cite[Theorem 3.8]{BrKoKr08} 
has a direct application to a discrete comparison principle for the semilinear problem, 
$-\Delta u + g(x,u) = f$; and, the matrix analysis approach yields an improved constant. 
Here, we extend the analysis to quasilinear problems.  
In the semilinear case, the argument 
follows by showing that the assembled matrix in question is a {\em Stieltjes} matrix,
which is a symmetric positive definite matrix with nonpositive off-diagonal entries.
The particulars of the analysis do not apply in the quasilinear case, as the 
corresponding coefficient matrix for the linearization of \eqref{eqn:QLPDEstrong} 
is easily seen to be nonsymmetric, hence not {\em Stieltjes}.  

In this paper, we develop conditions under which the assembled coefficient matrix $A$
corresponding to the PDE satisfied by the difference between a subsolution and 
supersolution of \eqref{eqn:QLPDEstrong} is monotone. 
We proceed by first showing it is a $Z$-matrix, one with 
nonpositive off-diagonal entries; then, showing there is a diagonal matrix $D$ such
that $AD$ is strictly diagonally dominant, satisfying a condition for monotonicity
found in \cite{FiPe62,Plemmons77}.
The first main contribution of this work is improving the constants 
in the local condition for the discrete comparison principle hence uniqueness to hold.
The second main contribution is establishing the discrete comparison principle 
holds for problem \eqref{eqn:QLPDEstrong} on meshes with at least some right angles. 

In previous work by the authors \cite{PoZh17b,PoZh17a}, the mesh was assumed acute,
meaning all interior angles were bounded below $\pi/2$.  In the current results,
interior angles can be no greater than $\pi/2$, and opposite angles across each edge 
must sum to less that $\pi$. 
It is well known (see, for instance \cite[Lemma 2.1]{Xu.J;Zikatanov.L1999}), 
that for the assembled matrix for the Laplacian, monotonicity holds
under the condition that the mesh is Delaunay, meaning the angles opposite each edge
sum to no more than $\pi$. More general geometric conditions for a discrete maximum
principle for Poisson's equation are developed in \cite{DrDuSc04}, in which 
certain refinements of meshes satisfying monotonicity conditions are shown to remain
monotone.  The stronger condition on the geometry in this work comes from 
the variable-dependence in the principal part of the linearized problem, so that
improved conditions for Laplace operators do not directly apply. 

The theory of monotone matrices in numerical analysis has been well-studied, largely
with respect to the convergence of iterative methods \cite{Varga.R2000,Young71}.
In \cite{Plemmons77}, a collection of 40 conditions for a $Z$-matrix to be a
nonsingular $M$-matrix,
hence monotone, is drawn from the literature.  We use one of those conditions,
which appears earlier in \cite{FiPe62}, to establish our results.  The main 
technical challenge of this work is to understand the monotonicity of the
coefficient matrix arising from the discretization of a linear 
convection-diffusion, or reaction-convection-diffusion equation. 
This matrix is nonsymmetric, and lacks strict diagonal dominance.
The failure in the linear case for a convection-diffusion
coefficient matrix to have strictly positive row-sums is also discussed in 
\cite{Xu.J;Zikatanov.L1999}.  

The comparison principle for \eqref{eqn:QLPDEstrong} is equivalent to a maximum
principle for a linearized equation satisfied by the difference of a subsolution and 
supersolution to \eqref{eqn:QLPDEstrong}, which is demonstrated here by the
monotonicity of the assembled coefficient matrix. This relationship and its
implication for the uniqueness of the finite element solution is summarized
in Theorem \ref{thm:comparison}. 
 
The remainder of the paper is structured as follows.  In Section \ref{sec:prelim},
the discretization and discrete comparison principle are introduced.
Then, the linearized problem used to investigate the comparison principle is derived. 
Theorem \ref{thm:comparison} summarizes the relationship between the monotonicity 
of the assembled matrix for the linearized problem and the comparison principle 
for the nonlinear problem. The definitions of $Z$, $L$ and $M$ matrices, 
and the relevant theorems on their relationships are recalled from the literature. 
Section \ref{sec:Mcond} contains the technical estimates used to show the assembled 
matrix $A$ is monotone.

\section{Preliminaries}\label{sec:prelim}
We make the following assumptions on the problem data $\kappa(\cdot \, , \cdot)$  and
$g(\cdot\, , \cdot)$.
\begin{assumption}\label{A1:elliptic}
Assume $\kappa(x,\eta)$ and $g(x,\eta)$ are Carath\'eodory functions, measurable in $x$
for each $\eta \in \R$, and $C^1$ in $\eta$ for $\ale x \in \Omega$. Assume 
there are constants $0 < k_\alpha < k_\beta$ with
\begin{align}\label{eqn:A1:elliptic}
k_\alpha \le \kappa(x, \eta) \le k_\beta,
\end{align} 
for all $\eta \in \R$, and $\ale x \in \Omega$. Assume there is a positive $K_\eta$ with
\begin{align}\label{eqn:A1:Keta}
\left| \f{\pa \kappa}{\pa \eta}(x,\eta) \right| \le K_\eta, 
\end{align}
for all $\eta \in \R$ and $\ale x \in \Omega$.
Assume $g(x,\eta)$ is nondecreasing with respect to its second argument, and
there is a constant $G_\eta$ with
\begin{align}\label{eqn:A1:Geta}
0 \le \f{\pa g}{\pa \eta}(x,\eta) \le G_\eta,
\end{align}
for all $\eta \in \R$ and $\ale x \in \Omega$.
\end{assumption}
Under Assumption \ref{A1:elliptic}, the
PDE is known to satisfy a comparison principle and  have a unique solution, 
as demonstrated in \cite{DoDuSe71,Trudinger74} and \cite[Chapter 10]{GilbargTrudinger}.
Additionally, the weak form of \eqref{eqn:QLPDEstrong} is 
given as follows for $V = H_{0}^1(\Omega)$, the closed subspace of $H^1$ with vanishing
trace on $\pa \Omega$. Find $u \in V$ such that
\begin{align}\label{eqn:QLPDEweak}
\int_\Omega \kappa(x,u) \grad u \tdot \grad v + g(x,u)v \, \d x = 
\int_\Omega fv \, \d x, \tforall v \in V.
\end{align}
The functional setting of the weak form can be understood in the context of the
Leray-Lions conditions for pseudomonotonicity.  We refer interested readers to
\cite[Chapter 2-3]{CarlLeMontreanu} for further details. We note that the class of
problems defined by the assumptions in this section is called {\em nonmonotone} because
the inequality 
\[\int_\Omega (\kappa(x,w)\grad w -\kappa(x,v) \grad v)\tdot \grad(w-v) \, \d x\ge 0,\]
does not in general hold for all $w, v \in V$, 
even for $\kappa = \kappa(u)$, with $\kappa'(u) \ge 0$.

\subsection{Discretization}\label{subsec:discret}
Let $\cT$ be a conforming simplicial partition of domain $\Omega$ that exactly 
captures the boundary.
Let $\overline \cQ$ be the collection of vertices or nodes of $\cT$, and let
$\cQ =\overline \cQ \setminus \pa \Omega$ be the set nodes that do not lie on the
Dirichlet boundary, corresponding to the mesh degrees of freedom.
Let $\cV \subsetneq V$ be the discrete space spanned by
the piecewise linear basis functions $\{\varphi_j\}$ that satisfy
$\varphi_i(q_i) = 1$ and 
$\varphi_i(q_j) = 0$ for each $q_j \in \overline \cQ$ with $q_j \ne q_i$.
Define the non-negative subset of $\cV$ by 
$\cV^+ \coloneqq \{ v \in \cV ~\rest~ v \ge 0\}$.

Let $\omega_i$ be the support of the basis function $\varphi_i$. Define the intersection
of support for any two basis functions with respect to a global numbering by
$\omega_{ij} = \omega_i \cap \omega_j$. In terms of the corresponding nodes $q_i$
and $q_j$, it follows that $\omega_{ij}$ is the union of elements that share both
$q_i$ and $q_j$ as vertices.
\begin{align*}
\omega_{ij} = \bigcup \{T \in \cT ~\rest~ q_i \in T ~\an q_j \in T\}.
\end{align*}

Additional notation for the discretization is summarized as follows, and illustrated
in Figure \ref{fig:diags}.
\begin{itemize}
\item $e_{ij}$, denotes the edge connecting vertices $q_i$ and $q_j$.
\item $\theta_{ij}^+$ and $\theta_{ij}^-$ denote the two respective angles opposite
      edge $e_{ij}$ in $\omega_{ij}$.
\item $\theta_{j,T}$ denotes the interior angle of triangle $T$ at vertex $q_j$, and
      $\theta_{ij,T}$ denotes the interior angle opposite vertices $q_i$ and $q_j$ in
      triangle $T$.
\item $q_{ij}^T$ denotes the vertex opposite both $q_i$ and $q_j$ in triangle $T$.
\item $\varphi_{ij}^T$ denotes the basis function associated with $q_{ij}^T$ in
      triangle $T$.
\item $\delta^e_{ij}(v) = |v(q_i) - v(q_j)|$ is the absolute difference of nodal
      values over edge $e_{ij}$.
\item $\delta_{\omega}^{(i)}(v) = \max\{\delta_{jk}^e(v) ~\rest~ q_j, q_k \in \omega,
      ~j,k \ne i  \}$ 
      is the maximum
      difference between neighboring nodal values of $v$ in $\omega$, 
      over each edge not touching vertex $q_i$.
\item $\delta_{\omega}(v) = \max\{\delta_{jk}^e(v) ~\rest~  q_j, q_k \in \omega\}$ 
      denotes the maximum difference between neighboring nodal values of $v$ in $\omega$.
\item $|T|$ denotes the area of triangle $T \in \cT$.
\item $|T_\cT| = \max_{T \in \cT}|T|$.
\item $\overline\cQ_i= \{q_j \in \overline \cQ ~\rest~ q_j \in \omega_{i}, ~j \ne i\}$, 
denotes the set of vertices neighboring $q_i$, including those on $\Gamma_D$.
\item $\cQ_i= \overline \cQ_i \setminus \Gamma_D$, the set of non-Dirichlet vertices 
neighboring $q_i$.
\end{itemize}

\begin{figure}
\centering
\includegraphics[trim=5pt 5pt 5pt 5pt, clip=true,width=0.8\textwidth]
{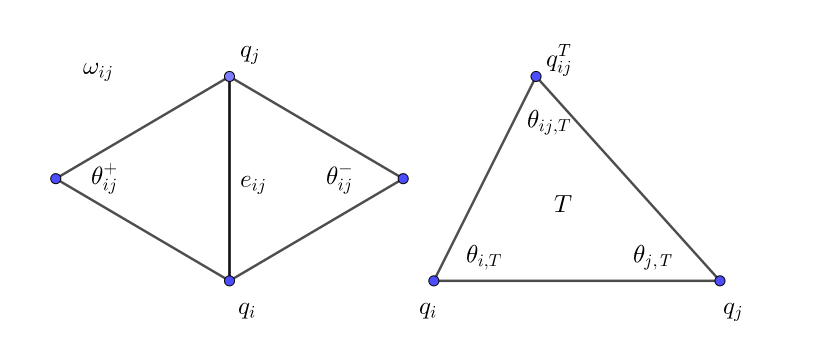}
\caption{Left: Schematic of a patch. Right: Schematic of an element.}
\label{fig:diags}
\end{figure}

\begin{assumption}\label{A2:mesh}
It is assumed any interior angle of the mesh satisfies $\theta \le \pi/2$, and 
that any two angles opposite an edge must sum to less than $\pi$. 
In particular, there is a constant $\beta_m > 0$ for which
\begin{align}\label{eqn:opp-angle-cond}
\cot \theta_{ij}^+ + \cot \theta_{ij}^- > \beta_m, ~\text{ for each }~ 
\omega_{ij} \subset \cT.
\end{align}
It is also assumed that the mesh satisfies a smallest-angle condition over each
neighborhood $\omega_{ij}$. There is a constant $\beta_M > 0$ for which
\begin{align}\label{eqn:full-angle-cond}
\cot \theta_{ij}^+ + \cot \theta_{ij}^- + 
\sum_{T \in \omega_{ij}} \cot \theta_{i,T} \le \beta_M
~\text{ for each }~ \omega_{ij} \subset \cT.
\end{align}
\end{assumption}

\subsection{Comparison framework}\label{subsec:framework}
Consider the problems: Find $u_i \subset \cV$ such that
\begin{align}\label{eqn:QLPDEdiscreteF}
\int_\Omega\kappa(x,u_i) \grad u_i \tdot \grad v +g(x,u_i)v \, \d x 
= \int_\Omega f_i v \, \d x, 
\tforall v \in \cV, 
\end{align}
for $f_i \in L_2(\Omega)$, $i = 1,2$.
The discrete comparison principle for \eqref{eqn:QLPDEdiscreteF} states that 
whenever $f_1 \le f_2$, ($\ale x \in \Omega$), meaning
$\int_\Omega (f_1 - f_2)v \, \d x \le 0$,
for each $v \in \cV^+$, then it holds that $u_1 \le u_2$.

The comparison principle can be restated in terms of a 
maximum principle for $w = u_1 - u_2$. The discrete problem satisfied by $w$ can
be understood by applying Taylor's theorem to the difference of 
\eqref{eqn:QLPDEdiscreteF} with $i = 1$ and $i = 2$.
\begin{align}\label{eqn:a1_001}
&\int_\Omega \left( \kappa(x,u_1) \grad u_1 \tdot \grad v + g(x,u_1)v \right) \, \d x- 
\int_\Omega \left( \kappa(x,u_2) \grad u_2 \tdot \grad v + g(x,u_2)v \right)\, \d x 
\nonumber \\
& = \int_\Omega \kappa(x,u_1) \grad (u_1 - u_2) \tdot \grad v 
    - (\kappa(x,u_2) - \kappa(x,u_1)) \grad u_2 \tdot \grad v \, \d x
\nonumber \\
&  + \int_\Omega (g(x,u_1) - g(x,u_2)) v \, \d x
\nonumber \\
& = \int_\Omega \kappa(x,u_1)\grad (u_1 - u_2) \tdot \grad v \, \d x 
    - \int_\Omega \left( \int_0^1 
     \f{\pa \kappa}{\pa \eta}(x,z(t)) (u_1 - u_2) \, d t\right) 
      \grad u_2 \tdot  \grad v \, \d x
\nonumber \\ 
&  + \int_\Omega\left( \int_0^1 
     \f{\pa g}{\pa \eta}(x,z(t)) (u_1 - u_2)  \, dt \right)  v \, \d x, 
\end{align}
where $z(t) = t u_1 + (1-t)u_2$, for $0 \le t \le 1$.
Letting $w = u_1 - u_2$, the equation satisfied by $w$ is then
\begin{align}\label{eqn:a1_002}
&\int_\Omega \kappa(x,u_1)\grad w \tdot \grad v \, \d x 
    +\int_\Omega\left( \int_0^1  \f{\pa \kappa}{\pa \eta}(x,z(t)) \, d t\right) w 
      \grad u_2 \tdot \grad v \, \d x
\nonumber \\ 
&  + \int_\Omega\left(\int_0^1 \f{\pa g}{\pa \eta}(x,z(t)) w  \, dt \right) v \, \d x 
= \int_\Omega (f_1 - f_2) v \, \d x, ~\tforall v \in \cV. 
\end{align}

The comparison principle for $u_1$ and $u_2$ in \eqref{eqn:QLPDEdiscreteF} is then
equivalent to the weak maximum principle for $w = u_1 - u_2$ in 
\eqref{eqn:a1_002}, namely $w \le 0$ whenenever $f_1 - f_2 \le 0$.
We now turn our attention to establishing the weak maximum principle for $w$. 

\section{Discrete maximum principle}
From \eqref{eqn:a1_002}, the linear equation for $w$ is a general second order 
elliptic equation with convection and reaction terms
\begin{align}\label{eqn:rcd001}	
\int_{\Omega} \kappa(x, u_{1}) \nabla w\cdot \nabla v \,  \d x 
+ \int_{\Omega} \vec{b}(x, u_{2}) w\tdot\nabla v \, \d x + \int_{\Omega} c(x) wv \, \d x 
= \int_{\Omega}(f_{1}-f_{2}) v \, \d x,
\end{align}
for all $v \in \cV$, with
\begin{align}\label{eqn:rcd002}
\vec b(x) & \coloneqq b(x) \grad u_2(x), ~\text{ with }~ 
b(x) \coloneqq\int_0^1 \f{\pa \kappa}{\pa \eta}(x,z(t)) \, \d t,
\\ \label{eqn:rcd003}
     c(x) & \coloneqq \int_0^1 \f{\pa g}{\pa \eta}(x, z(t)) \, \d t.
\end{align}

We now turn our attention to the properties of the assembled system
\eqref{eqn:rcd001}-\eqref{eqn:rcd003}.
The discrete function $w \in \cV$ has the expansion in basis functions
$w = \sum_{j=1}^{n} W_{j}\varphi_{j}$, where $n$ is the number of mesh degrees of
freedom. Choosing the test functions $v = \varphi_{i}$ for each $i=1, \cdots, n$ 
in Equation \eqref{eqn:rcd001}, we obtain the equivalent matrix problem $AW = F$.
In particular, $W = (W_{1}, \cdots, W_{n})^{T}$, $F=(F_{1}, \cdots, F_{n})^{T}$ with 
$A =(a_{ij})$ and $F_i$ defined entrywise by
\begin{align}\label{eqn:assemA}
a_{ij} &= \int_{\omega_{ij}} \kappa(x, u_{1})
         \nabla \varphi_j \tdot  \nabla \varphi_i \, \d x +
\int_{\omega_{ij}} \vec{b}(x, u_{2}) \varphi_j \tdot\nabla \varphi_i \, \d x
+ \int_{\omega_{ij}} c(x) \varphi_j \varphi_i \, \d x
\\ \label{eqn:assemF}
F_{i} &= \int_{\omega_{i}} (f_{1}-f_{2}) \varphi_{i} \,  \d x.
\end{align}

We now investigate the maximum principle for $w$ through the monotonicity 
of matrix $A$ given by \eqref{eqn:assemA}.  
\begin{definition}[Monotone matrix]\label{def:monoMat}
A real square matrix $A$ is monotone 
(in the sense of \cite[Section 23]{Collatz.L1966} ) if for all real vectors 
$v$, $Av\ge 0$ implies $ v\ge 0$, 
where $\ge$ is the element-wise ordering.
\end{definition}
By this definition, a monotone matrix is nonsingular because
if $A$ is a monotone matrix and $x$ is a vector in its nullspace, then both
$x\ge 0$ and $-x\ge 0$, implying $x = 0$.
We mention another relevant property of monotone matrices. 
\begin{proposition}
\label{prop:monotone}
$A$ is monotone iff $A$ is invertible and $A^{-1} \ge 0$.
\end{proposition}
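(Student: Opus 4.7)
The plan is to establish the biconditional in two short implications, both of which follow essentially by manipulating the defining property given in Definition \ref{def:monoMat} together with the nonsingularity remark immediately preceding the proposition.

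For the reverse direction ($\Leftarrow$), I would assume $A$ is invertible with $A^{-1} \ge 0$ entrywise. Given any real vector $v$ with $Av \ge 0$, I would write $v = A^{-1}(Av)$. Since the product of an entrywise nonnegative matrix with an entrywise nonnegative vector is nonnegative (each resulting entry is a sum of products of nonnegative reals), it follows that $v \ge 0$, so $A$ is monotone in the sense of Definition \ref{def:monoMat}.

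For the forward direction ($\Rightarrow$), invertibility is already justified in the paragraph preceding the proposition: any $x \in \ker A$ would satisfy both $Ax = 0 \ge 0$ and $A(-x) = 0 \ge 0$, forcing $x \ge 0$ and $-x \ge 0$, hence $x = 0$. To show $A^{-1} \ge 0$, I would look column by column. Let $e_i$ denote the $i$-th standard basis vector in $\R^n$, and set $v_i := A^{-1} e_i$. Then $A v_i = e_i \ge 0$, so monotonicity forces $v_i \ge 0$. Since the $i$-th column of $A^{-1}$ is exactly $v_i$ for each $i = 1, \dots, n$, the matrix $A^{-1}$ is entrywise nonnegative.

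There is no genuine obstacle here: the proposition is a short structural statement linking two equivalent characterizations of monotonicity, and the argument uses nothing beyond linearity and the definition. The only point requiring minor care is ensuring $A^{-1}$ is actually available when assuming monotonicity, which is secured by the kernel argument quoted above before any use is made of the inverse.
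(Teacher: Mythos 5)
Your argument is correct and matches the paper's own proof essentially line for line: the forward direction identifies the columns of $A^{-1}$ as solutions of $Av_i = e_i \ge 0$ and applies monotonicity column by column, and the reverse direction writes $v = A^{-1}(Av)$ and uses entrywise nonnegativity of the product. No meaningful difference in approach.
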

\begin{proof}
If $A$ is monotone, then $A$ is nonsingular and $A^{-1}$ exists. 
Let $x$ be the $i$th column of $A^{-1}$, so we have $Ax = e_{i}$, 
the $i$-th standard basis vector. 
This shows $Ax \ge 0$ which implies $x\ge 0.$ Therefore $A^{-1} \ge 0.$

Reversely, suppose $A$ has an inverse and $A^{-1}\ge 0.$ If $Ax\ge 0$, 
then $x = A^{-1}Ax \ge A^{-1} 0 = 0.$ Hence $A$ is monotone. 
\end{proof}

The next theorem summarizes how monotonicity of the assembled
matrix for $w$ implies the comparison principle, hence uniqueness of the solution.
\begin{theorem}\label{thm:comparison}
Suppose the functions $\kappa(x,\eta)$ and $g(x,\eta)$ are each measurable with 
respect to the first argument, and $C^1$ with respect to the second.
Let $u_i \subset \cV$ solve
\begin{align}\label{eqn:QLPDEdiscrete}
\int_\Omega\kappa(x,u_i) \grad u_i \tdot \grad v +g(x,u_i)v \, \d x 
= \int_\Omega f_i v \, \d x, 
\tforall v \in \cV, 
\end{align}
for $f_i \in L_2(\Omega)$, $i = 1,2$, with $\int_\Omega (f_1 - f_2)v \, \d x \le 0$,
for all $v \in \cV^+$.
Let $A$ be the coefficient matrix defined by \eqref{eqn:assemA}, and
$F$ the vector defined by \eqref{eqn:assemF}.
Then if $A$ is monotone, $u_1 \le u_2$. Moreover, if $f_1 = f_2$, then $u_1 = u_2$.
\end{theorem}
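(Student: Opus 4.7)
The plan is to interpret the theorem as a near-immediate consequence of the linearization already carried out in \eqref{eqn:a1_001}--\eqref{eqn:rcd003} together with the definition of monotonicity. First I would set $w = u_1 - u_2 \in \cV$, write $w = \sum_{j=1}^n W_j \varphi_j$, and recall that by the derivation leading to \eqref{eqn:rcd001}, the nodal vector $W = (W_1,\dots,W_n)^T$ satisfies the linear system $AW = F$, where $A$ and $F$ are given by \eqref{eqn:assemA}--\eqref{eqn:assemF}. Note that although the coefficients in \eqref{eqn:rcd002}--\eqref{eqn:rcd003} involve the unknown $w$ through $z(t) = t u_1 + (1-t) u_2$, for the purposes of this theorem $u_1$ and $u_2$ are given solutions, so $A$ is a fixed real matrix.

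Next I would examine the sign of $F$. Since each basis function $\varphi_i$ is nonnegative (it is a standard piecewise linear hat function) and lies in $\cV^+$, the hypothesis $\int_\Omega (f_1 - f_2) v \, \d x \le 0$ for all $v \in \cV^+$ gives
\begin{equation*}
F_i = \int_{\omega_i}(f_1 - f_2)\varphi_i \, \d x \le 0, \qquad i=1,\dots,n.
\end{equation*}
Hence $-F \ge 0$ componentwise. Because $A$ is monotone, $A(-W) = -F \ge 0$ forces $-W \ge 0$, i.e., $W_i \le 0$ at every degree of freedom. Since $w$ additionally vanishes at the Dirichlet nodes and is piecewise linear, the nodal inequality $W_i \le 0$ extends to $w(x) \le 0$ for all $x \in \Omega$, yielding $u_1 \le u_2$.

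For the moreover statement, if $f_1 = f_2$ then $F = 0$, so both $-F \ge 0$ and $F \ge 0$ hold. Applying the monotonicity of $A$ twice, or equivalently using Proposition \ref{prop:monotone} to write $W = A^{-1} \cdot 0 = 0$, gives $w \equiv 0$ and therefore $u_1 = u_2$. I do not expect any genuine obstacle in this argument: the only bookkeeping care required is to make sure the hypotheses on $\kappa$ and $g$ (measurability in $x$, $C^1$ in $\eta$) are exactly what is needed to justify the Taylor expansion in \eqref{eqn:a1_001}--\eqref{eqn:a1_002}, so that the matrix $A$ in \eqref{eqn:assemA} is in fact the correct assembled operator for $w$; once that is in hand the conclusion is a direct application of Definition \ref{def:monoMat}. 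The real work of the paper, establishing that $A$ actually is monotone under Assumptions \ref{A1:elliptic} and \ref{A2:mesh}, is deferred to Section \ref{sec:Mcond}.
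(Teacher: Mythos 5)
Your proposal is correct and follows essentially the same route as the paper's proof: identify $w = u_1 - u_2$ with the solution $W$ of $AW = F$, observe $F \le 0$ componentwise from the nonnegativity of the hat functions, and then invoke monotonicity of $A$ to conclude $W \le 0$. You spell out a few steps the paper leaves implicit (the sign-flip $A(-W) = -F \ge 0$ and the passage from nodal values to the piecewise linear function), but there is no substantive difference in the argument.
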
 
\begin{proof}
Let the discrete function $w \in \cV$ be given by $w = \sum_{j = 1}^n W_j \varphi_j$, 
with $W = (W_{1}, \cdots, W_{n})^{T}$, where $n = \card(\cQ)$, 
the number of mesh degrees of freedom. The monotonicity of $A$ implies its invertibility.
By equations \eqref{eqn:a1_001}-\eqref{eqn:a1_002}, and the definitions of 
$A$ and $F$, the vector $W$ that solves $AW = F$ uniquely defines $w \in \cV$ that
satisfies $w = u_1 - u_2$.

Since $A$ is monotone, $F \le 0$ implies  $W \le 0$ which by the
nonnegativity of basis functions implies $w \le 0$ implying $u_1 \le u_2$.
If $f_1 = f_2$, it follows that $u_1 = u_2$.
\end{proof}

\begin{remark} The monotonicity of matrix $A$ given by \eqref{eqn:assemA} also, by
the reasoning above, establishes a discrete maximum principle for the linear 
problem \eqref{eqn:rcd001}.
\end{remark}

In the remainder of the paper we develop conditions under which the assembled matrix
\eqref{eqn:assemA} is monotone.
Three related classes of matrices we enounter in the proof are $Z$-matrices,
$L$-matrices and $M$-matrices, defined as follows.

\begin{definition}[$Z$-matrix]\label{def:Zmatrix}
A matrix $A$ is called an $Z$-matrix if $a_{ij}\le 0$ when $i\neq j$ 
(see \cite[Definition 4,1]{FiPe62},\cite{Plemmons77}).
\end{definition}
A $Z$-matrix with positive diagonal entries is also called an $L$-matrix.  
\begin{definition}[$L$-matrix]\label{def:Lmatrix}
A matrix $A$ is called an $L$-matrix if $a_{ii}>0$ for all $i$ ,
and $a_{ij}\le 0$ when $i\neq j$ (see \cite[Definition 2.9]{Windisch.G1989},
\cite[Chapter 2, Definition 7.1]{Young71})
\end{definition}
A monotone $Z$-matrix is an $M$-matrix.
\begin{definition}[{$M$-matrix, see \cite[Definition 3.22]{Varga.R2000}}]
\label{def:Mmatrix}
A real $n \times n$ matrix $A = (a_{ij})$ with $a_{ij} \le 0$ for all $i\neq j$ is an $M$-matrix if $A$ is nonsingular and $A^{-1} \ge 0$.
\end{definition}
We note the equivalence of this definition to \cite[Definition 7.3]{Young71}, and 
to a nonsingular $M$-matrix, as in \cite{Plemmons77}.
If the off-diagonal entries of $A$ are nonpositive, then $A$ is monotone if and only 
if $A$ is an $M$-matrix. 
This is clear from the Proposition~\ref{prop:monotone} of the monotone matrix, 
and the definition of the $M$-matrix. 

In what follows, we will show the monotonicity
of the coefficient matrix $A$ given by \eqref{eqn:assemA}, by first showing $A$ is a 
$Z$-matrix: the off-diagonal entries are nonpositive; then showing it is
monotone, by the following 1962 result of M. Fiedler and V. Pt\'ak.  
We first recall the standard definition of (strict) diagonal dominance.
\begin{definition}\label{def:diag-dom}
An $n \times n$ matrix is strictly diagonally dominant if
\begin{align}\label{eqn:diag-dom}
|a_{ii}| > \sum_{j=1, j \ne i}^n|a_{ij}|, ~\tforall~ i = 1, \ldots, n. 
\end{align}
Strict diagonal dominance may also be called strong diagonal dominance 
\cite[p.41]{Young71}.  A matrix is called diagonally dominant 
(or weakly diagonally dominant) if equality is allowed in \eqref{eqn:diag-dom}, 
with a strict inequality for at least one index $i$.
\end{definition}
The full theorem of \cite{FiPe62} presents 13 equivalent
statements, here we paraphrase the two most relevant to our purposes.
\begin{theorem}\cite[Theorem 4,3.4]{FiPe62},\cite[Theorem 1 (N39)]{Plemmons77}
\label{thm:FiPe}
Let $A$ be a $Z$-matrix. Then $A$ is monotone if and only if there exists a diagonal 
matrix $D$ with positive diagonal elements
such that the matrix $AD$ is strictly diagonally dominant.
\end{theorem}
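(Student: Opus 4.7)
My plan is to prove the two implications separately, exploiting the fact that right-multiplication by a positive diagonal matrix preserves the $Z$-matrix sign pattern, and that strict diagonal dominance of a $Z$-matrix with positive diagonal entries classically implies $M$-matrix status.

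For the forward direction, I would start from monotonicity, which by Proposition~\ref{prop:monotone} gives $A^{-1}$ nonsingular with $A^{-1} \ge 0$. The key construction is to use $A^{-1}$ itself to manufacture the scaling: choose any strictly positive vector $y$ (say $y = e = (1, \ldots, 1)^T$) and set $x = A^{-1} y$. Because $A^{-1}$ is nonsingular and entrywise nonnegative, no row of $A^{-1}$ can vanish identically, so $x_i > 0$ for every $i$ and $D := \diag(x_1, \ldots, x_n)$ is positive diagonal. The identity $(AD)e = Ax = y$ forces the $i$-th row-sum of $AD$ to equal $y_i > 0$; combined with $(AD)_{ij} = a_{ij} x_j \le 0$ for $i \ne j$ (from the $Z$-matrix property), this rearranges to $a_{ii} x_i > \sum_{j \ne i}|(AD)_{ij}|$, which is strict diagonal dominance (and incidentally gives $a_{ii} > 0$, so $D$ also witnesses $AD$ being an $L$-matrix).

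For the reverse direction, I would assume the positive diagonal $D$ is given with $AD$ strictly diagonally dominant, interpreted in the standard convention that the diagonal entries of $AD$ are positive (automatic in every application of the theorem). Then $AD$ is a strictly diagonally dominant $L$-matrix in the sense of Definition~\ref{def:Lmatrix}, and I would invoke the classical result (e.g., \cite{Varga.R2000}) that such a matrix is a nonsingular $M$-matrix: writing $AD = \Lambda - R$ with $\Lambda = \diag((AD)_{ii})$ invertible and $R \ge 0$, the strict row-sum bound forces $\rho(\Lambda^{-1} R) < 1$, and the Neumann series represents $(AD)^{-1}$ as an entrywise nonnegative matrix. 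Then $A^{-1} = D(AD)^{-1} \ge 0$, so $A$ is monotone by Proposition~\ref{prop:monotone}.

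The main conceptual obstacle is the reverse direction's appeal to the classical ``strictly diagonally dominant $L$-matrix implies $M$-matrix'' fact, which I would cite rather than reprove. The forward direction is the more instructive half: once one recognizes that for a $Z$-matrix strict diagonal dominance of $AD$ is equivalent to positivity of the row-sums $(AD)e$, the choice $x = A^{-1} y$ with $y > 0$ becomes natural, and the positivity of $x$ reduces to the observation that no row of the invertible nonnegative matrix $A^{-1}$ can be identically zero.
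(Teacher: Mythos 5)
The paper itself gives no proof of Theorem~\ref{thm:FiPe}; it is quoted verbatim from Fiedler--Pt\'ak and Plemmons as a background result, so there is no in-paper argument to compare against. Your proof is a correct, self-contained derivation of the characterization and it is essentially the canonical one. The forward direction uses the standard trick that for a monotone matrix $x := A^{-1}e$ is strictly positive (no row of an invertible nonnegative matrix can vanish), so $D := \diag(x)$ gives $(AD)e = e > 0$; combined with the $Z$-sign pattern this is exactly strict diagonal dominance with positive diagonal. The reverse direction is the classical Jacobi regular-splitting argument: $AD = \Lambda - R$ with $\Lambda > 0$ diagonal, $R \ge 0$, $\rho(\Lambda^{-1}R) \le \|\Lambda^{-1}R\|_\infty < 1$, Neumann series yields $(AD)^{-1} \ge 0$, and $A^{-1} = D(AD)^{-1} \ge 0$.

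One point worth flagging explicitly, which you touch on but phrase as a ``convention'': as literally written, with Definition~\ref{def:diag-dom} using $|a_{ii}|$, the theorem is false. For instance $A = \diag(1,-1)$ is a $Z$-matrix and $AI$ is strictly diagonally dominant in the absolute-value sense, yet $A$ is not monotone. The version that Fiedler--Pt\'ak and Plemmons actually prove replaces ``$AD$ strictly diagonally dominant'' by ``$AD$ has strictly positive row-sums'' (equivalently, for a $Z$-matrix, strictly diagonally dominant \emph{with positive diagonal}). Your forward direction produces exactly that stronger conclusion, and your reverse direction correctly requires it; so your two implications match up and the proof is sound. You might simply say that the hypothesis should be read as row-sum positivity (which is how it is used in Lemma~\ref{lem:strict-diag-dom}, where positivity of the diagonal is separately established by Lemma~\ref{lem:pii-pos}), rather than appealing to an unstated convention.
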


Along the way to showing the monotonicity, we also show $A$ is an 
$L$-matrix: the diagonal entries are strictly positive.  As as consequence of the 
monotonicity, $A$ is then also an $M$-matrix, as in the corresponding result
of \cite{Plemmons77}.  
There are many characterizations of $Z$-matrices as $M$-matrices, as 
in the review paper \cite{Plemmons77}, the references therein, 
and \cite[Chapter 3]{Varga.R2000}.  Further discussion on the conditions relating
$Z$-matrices, $L$-matrices and $M$-matrices may be found in \cite[Chapter 2]{Young71},
and \cite[Chapter 2]{Windisch.G1989}. 
For completeness, we also mention the results of F. Bouchon \cite{Bouchon07},
which depend explicity on irreducibility properties of the assembled matrix.
Referring to the counterexample of \cite{DrDuSc04}, the irreducibility can fail to hold 
in certain situations, even on a connected mesh.
As the assembled matrix from \eqref{eqn:assemA} is not itself strictly diagonally 
dominant, and it is not obvious how to compute the spectral radius by direct means, 
we choose Theorem \ref{thm:FiPe} as the simplest condition to demonstrate based on
computationally verifiable conditions.

\section{Properties of the assembled matrix}\label{sec:Mcond}
In this section, we develop conditions under which $A$, the coefficient matrix from 
\eqref{eqn:assemA}, is a $Z$-matrix that satisfies Theorem \ref{thm:FiPe}, hence is
monotone. Our main comparison result then follows from Theorem \ref{thm:comparison}. 
Each entry $a_{ij}$ of matrix $A$ is given by \eqref{eqn:assemA}
with $\vec b$ and $c$ given by \eqref{eqn:rcd002} and \eqref{eqn:rcd003}, respectively.
We now consider the conditions under which matrix $A$ has non-positive off-diagonal 
entries, meaning $A$ is a $Z$-matrix.

\begin{lemma}\label{lem:pij-nonpos}
Let Assumption \ref{A1:elliptic} and Assumption \ref{A2:mesh} hold. 
Let $A = (a_{ij})$ be given by \eqref{eqn:assemA}.
On satisfaction of the condition on the nodal values of $u_2$ and meshsize $|T_\cT|$ 
\begin{align}\label{eqn:pij-cond}
\f{1}{3 \beta_m} \left\{ K_\eta \beta_M \delta_{\omega_{ij}}(u_2) 
  + G_\eta |T_\cT| \right\} 
  \le k_\alpha, ~\text{ for each }~ \omega_{ij} \subset \cT,
\end{align}
it holds that $a_{ij} \le 0$ for each $i \ne j$.
\end{lemma}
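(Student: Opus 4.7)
The plan is to decompose the off-diagonal entry $a_{ij}$ given by \eqref{eqn:assemA} into its diffusion, convection, and reaction contributions over $\omega_{ij}$ (one or two triangles sharing the edge $e_{ij}$). I will extract a strictly negative upper bound from the diffusion piece and bound the other two in absolute value, so that the sum is nonpositive precisely under the stated condition. Since Assumption \ref{A2:mesh} forces every interior angle to satisfy $\theta \le \pi/2$, every cotangent appearing in the standard piecewise-linear finite element identities is nonnegative, which keeps the sign bookkeeping clean throughout.

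For the diffusion piece I use the pointwise identity $\nabla\varphi_j \cdot \nabla\varphi_i = -\cot\theta_{ij,T}/(2|T|)$ on each triangle, which is $\le 0$. Combining with $\kappa(x,u_1)\ge k_\alpha$ and the opposite-angle condition \eqref{eqn:opp-angle-cond}, I obtain
\[
\int_{\omega_{ij}} \kappa(x,u_1)\,\nabla\varphi_j\cdot\nabla\varphi_i\,\d x
\le -\tfrac{k_\alpha}{2}\bigl(\cot\theta_{ij}^+ + \cot\theta_{ij}^-\bigr) \le -\tfrac{k_\alpha\beta_m}{2}.
\]

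The main technical step is the convection term. On each $T \in \omega_{ij}$ with third vertex $q_m = q_{ij}^T$, I use $\sum_k \nabla\varphi_k = 0$ to write
\[
\nabla u_2 \cdot \nabla\varphi_i = (u_2(q_j)-u_2(q_i))\,\nabla\varphi_j\cdot\nabla\varphi_i + (u_2(q_m)-u_2(q_i))\,\nabla\varphi_m\cdot\nabla\varphi_i,
\]
so every coefficient is bounded by $\delta_{\omega_{ij}}(u_2)$. Using $|b(x)| \le K_\eta$, the identity $\int_T \varphi_j\,\d x = |T|/3$, and the cotangent formula for the gradient products, the factor $|T|$ cancels and I arrive at
\[
\Bigl|\int_T b(x)\,\varphi_j\,\nabla u_2 \cdot \nabla\varphi_i\,\d x\Bigr|
\le \tfrac{K_\eta \delta_{\omega_{ij}}(u_2)}{6}\bigl(\cot\theta_{ij,T} + \cot\theta_{im,T}\bigr).
\]
Since $\theta_{ij,T}$ is the angle at $q_m$ (equal to $\theta_{ij}^+$ or $\theta_{ij}^-$) and $\theta_{im,T} = \theta_{j,T}$, summing over $T \in \omega_{ij}$ rearranges to $(\cot\theta_{ij}^+ + \cot\theta_{ij}^-) + \sum_{T\in\omega_{ij}}\cot\theta_{j,T}$. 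Applying \eqref{eqn:full-angle-cond} to the neighborhood $\omega_{ij}=\omega_{ji}$ with $j$ in the role of $i$, this sum is at most $\beta_M$, so the convection piece is controlled by $K_\eta \beta_M \delta_{\omega_{ij}}(u_2)/6$.

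The reaction term is the easiest: $0 \le c(x) \le G_\eta$ and $\int_T \varphi_i\varphi_j\,\d x = |T|/12$ give an upper bound of $G_\eta|T_\cT|/6$ after summing over (at most) two triangles. Combining the three estimates,
\[
a_{ij} \le -\tfrac{k_\alpha\beta_m}{2} + \tfrac{K_\eta\beta_M\delta_{\omega_{ij}}(u_2) + G_\eta|T_\cT|}{6},
\]
which is $\le 0$ exactly when \eqref{eqn:pij-cond} holds. The only delicate point is justifying the use of \eqref{eqn:full-angle-cond} with $i$ and $j$ swapped, which is immediate from the symmetry of $\omega_{ij}$; the alignment of constants (the factor $3$ in the denominator and $\beta_M$ in the numerator of \eqref{eqn:pij-cond}) is what motivates the particular form of Assumption \ref{A2:mesh}.
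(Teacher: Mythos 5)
Your proof is correct and takes essentially the same approach as the paper: the cotangent identity for $\nabla\varphi_j\cdot\nabla\varphi_i$, the nodal decomposition of $\nabla u_2$ via $\sum_k\nabla\varphi_k=0$, the $|T|/3$ and $|T|/12$ mass identities, and Assumption \ref{A2:mesh} are all used in the same roles. The only cosmetic difference is that the paper bounds $a_{ji}$ and decomposes $\nabla u_2$ with $q_j$ as reference vertex, so \eqref{eqn:full-angle-cond} applies verbatim with $\theta_{i,T}$, whereas you bound $a_{ij}$ with $q_i$ as reference and therefore invoke the swapped form with $\theta_{j,T}$ -- which, as you correctly note, is equivalent since $\omega_{ij}=\omega_{ji}$ and the assumption ranges over all edge-adjacent ordered pairs.
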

\begin{proof}
Equivalently, and for convenience of later calculations, we consider the entry $a_{ji}$.
By direct calculation, it holds for $i \ne j$ that. 
\begin{align}\label{eqn:pij002}
\grad \varphi_i \tdot \grad \varphi_j \rest_T =  \f {-1} {2|T|} \cot \theta_{ij,T},
\end{align}
where $\theta_{ij,T}$ is the angle opposite edge $e_{ij}$ in triangle $T$.
Equation \eqref{eqn:pij002} together with \eqref{eqn:A1:elliptic} and 
\eqref{eqn:opp-angle-cond} implies
\begin{align}\label{eqn:pij002a}
\int_{\omega_{ij}} \kappa(x,u_1) \grad \varphi_i \tdot \grad \varphi_j \, \d x
\le k_\alpha \sum_{T \in \omega_{ij}} \int_T\grad \varphi_i \tdot \grad \varphi_j \, \d x 
=  -\f {k_\alpha} {2} (\cot \theta_{ij}^+ + \cot \theta_{ij}^-),
\end{align}
for interior nodes, $q_i, q_j \in \cQ$.
For the reaction term, $\int_T \varphi_i \varphi_j \, \d x = {|T|}/{12}$.
Together with \eqref{eqn:A1:Geta} and \eqref{eqn:rcd003}, this implies
\begin{align}\label{eqn:pij003a}
\int_{\omega_{ij}} c(x) \varphi_i \varphi_j \, \d x
\le \sum_{T \in \omega_{ij}} \f{G_\eta|T|}{12},
\end{align}
for $q_i, q_j \in \cQ$.  

To bound the  nonsymmetric term, the following decomposition is useful.
Over each triangle $T$ with vertices $q_i, q_i, q_k$ and discrete function $v \in \cV$, 
we have
\begin{align}\label{eqn:pij004}
\grad v = (v(q_i) - v(q_j)) \grad \varphi_i 
          + (v(q_k) - v(q_j))\grad \varphi_k.
\end{align}
Applying \eqref{eqn:pij002} and \eqref{eqn:pij004} with $v = u_2$ and 
$q_k = q_{ij}^T$ for each $T \in \omega_{ij}$ yields 
\begin{align}\label{eqn:pij005}
\int_{\omega_{ij}} b(x)\grad u_2\tdot \grad \varphi_j \varphi_i \, \d x
& =  (u_2(q_i) - u_2(q_j)) \int_{\omega_{ij}}\grad \varphi_i \tdot \grad \varphi_j\, 
     b(x) \varphi_i \, \d x 
\nonumber \\
& + \sum_{T \in \omega_{ij}} (u_2(q_{ij}^T) - u_2(q_j))
   \int_T\grad \varphi_{ij}^T \tdot \grad \varphi_j \, b(x)\varphi_i \, \d x 
\nonumber \\
& \le \f {K_\eta} 6 |u_2(q_i) - u_2(q_j)|(\cot \theta_{ij}^+ + \cot \theta_{ij}^-) 
\nonumber \\
& + \f {K_\eta} 6 \sum_{T \in \omega_{ij}} |u_2(q_{ij}^T) - u_2(q_j)|
    \cot \theta_{i,T},
\end{align}
where the inequality follows from \eqref{eqn:A1:Keta} of Assumption \ref{A1:elliptic}
and \eqref{eqn:rcd002}. For interior nodes $q_i,q_j \in \cQ$, we then have 
\begin{align}\label{eqn:pij005a}
\left| \int_{\omega_{ij}} \vec{b}(x) \tdot \grad \varphi_j \varphi_i \, \d x \right|
 \le \f{K_\eta}{6}\left( 
     \delta_{ij}^e(u_2)( \cot \theta_{ij}^+ + \cot \theta_{ij}^-) + 
      \delta_{\omega_{ij}}^{(i)}(u_2)
      \sum_{T \in \omega_{ij}} \cot \theta_{i,T} \right).
\end{align}
Applying \eqref{eqn:pij002a}, \eqref{eqn:pij003a} and \eqref{eqn:pij005a}
to \eqref{eqn:assemA}, it holds for each $i,j$ for with $q_i, q_j \in \cQ$ that
\begin{align}\label{eqn:pij_006}
a_{ji} & \le \f 1 2\left(-k_\alpha + \f{K_\eta}{3}\delta^e_{ij}(u_2) \right)
(\cot \theta_{ij}^+ + \cot \theta_{ij}^-)
+ \f 1 2\sum_{T \in \omega_{ij}}\!\! \left(\f {K_\eta}{3}\delta_{\omega_{ij}}^{(i)}(u_2) 
\cot \theta_{i,T} + \f{G_\eta|T|}{6} \right)
\nonumber \\
& \le \f 1 2 \left\{ -k_\alpha \beta_m 
 + \f{K_\eta \beta_M}{3}\delta_{\omega_{ij}}(u_2)
 + \f{G_\eta|T_\cT|}{3} \right\},
\end{align}
where the last inequality follows from the application of both angle conditions
\eqref{eqn:opp-angle-cond} and \eqref{eqn:full-angle-cond} from Assumption \ref{A2:mesh}.
If either $q_i$ or $q_j$ lies on the boundary $\pa \Omega$, then the contribution 
of each term is zero, and $a_{ij} = 0$.
The conclusion $a_{ij} \le 0$ then follows under the condition \eqref{eqn:pij-cond}.
\end{proof}

In the next lemma, we show the diagonal entries of $A$ are positive, under the
given condition which bounds the difference of nodal values connecting each edge in the
mesh. The local condition \eqref{eqn:pii-cond} for each $a_{ii}$ to be positive
is weaker than \eqref{eqn:pij-cond}, used above to determine the off-diagonal entries 
of $A$ are nonpositive, implying that matrix $A$ is an $L$-matrix as well as a 
$Z$-matrix. 
\begin{lemma}\label{lem:pii-pos}
Let Assumption \ref{A1:elliptic} and Assumption \ref{A2:mesh} hold. 
Let $A = (a_{ij})$ be given by \eqref{eqn:assemA}.
Then under the condition
\begin{align}\label{eqn:pii-cond}
\delta_{ij}^e(u_2) < \f {3 k_\alpha}{K_\eta},
\end{align}
it holds that $a_{ii} > 0$, for each $i$.
\end{lemma}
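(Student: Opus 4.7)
The plan is to estimate $a_{ii}$ from below by decomposing it into its diffusion, convection, and reaction contributions, and showing each triangle $T \in \omega_i$ makes a nonnegative contribution that is strictly positive on at least one triangle. The reaction part is immediately nonnegative, since $c(x) \ge 0$ by \eqref{eqn:A1:Geta} and \eqref{eqn:rcd003}, and $\varphi_i^2 \ge 0$. So the work lies in controlling the convection term against the diffusion term at the per-element level.

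The first step is the localization. On a triangle $T \in \omega_i$ with vertices $q_i, q_j, q_k$, I use the identity $|\grad \varphi_i|^2|_T = (\cot\theta_{ij,T} + \cot\theta_{ik,T})/(2|T|)$, which follows from $\grad\varphi_i = -\grad\varphi_j - \grad\varphi_k$ on $T$ together with \eqref{eqn:pij002}. Combined with \eqref{eqn:A1:elliptic}, this gives
\[
\int_T \kappa(x,u_1) |\grad \varphi_i|^2 \, \d x
\;\ge\; \tfrac{k_\alpha}{2}\bigl(\cot\theta_{ij,T} + \cot\theta_{ik,T}\bigr).
\]
For the convection contribution, I would pivot the decomposition \eqref{eqn:pij004} around $q_i$ rather than $q_j$, writing $\grad u_2|_T = (u_2(q_j)-u_2(q_i))\grad\varphi_j + (u_2(q_k)-u_2(q_i))\grad\varphi_k$. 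Because $\grad\varphi_\ell\cdot\grad\varphi_i$ is constant on $T$, it factors out of $\int_T b(x)\varphi_i\,\d x$, which is bounded in absolute value by $K_\eta |T|/3$ using \eqref{eqn:A1:Keta} and $\int_T \varphi_i\,\d x = |T|/3$. This yields
\[
\left|\int_T \vec{b}(x)\tdot\grad\varphi_i\,\varphi_i\,\d x\right|
\;\le\; \tfrac{K_\eta}{6}\bigl(\delta^e_{ij}(u_2)\cot\theta_{ij,T} + \delta^e_{ik}(u_2)\cot\theta_{ik,T}\bigr),
\]
where I have used that all interior angles are at most $\pi/2$ so the cotangents are nonnegative and absolute values can be dropped.

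Combining the two estimates on $T$ and regrouping gives the clean per-triangle lower bound
\[
\int_T \kappa|\grad\varphi_i|^2 + \vec{b}\tdot\grad\varphi_i\,\varphi_i\,\d x
\;\ge\; \tfrac{1}{6}\sum_{\ell\in\{j,k\}} \cot\theta_{i\ell,T}\bigl(3k_\alpha - K_\eta\,\delta^e_{i\ell}(u_2)\bigr).
\]
Under hypothesis \eqref{eqn:pii-cond}, each factor $3k_\alpha - K_\eta \delta^e_{i\ell}(u_2)$ is strictly positive. Since the interior angles of $T$ at $q_j$ and $q_k$ cannot both equal $\pi/2$ (the third angle $\theta_{i,T}$ is strictly positive), at least one of $\cot\theta_{ij,T}, \cot\theta_{ik,T}$ is strictly positive, so the contribution is strictly positive on any triangle in $\omega_i$ incident to an edge whose cotangent sum is nonzero. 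Summing over $T\in\omega_i$ and reorganizing the cotangents by edge $e_{i\ell}$, the interior-edge sums $\cot\theta_{i\ell}^+ + \cot\theta_{i\ell}^-$ are bounded below by $\beta_m > 0$ by \eqref{eqn:opp-angle-cond}, yielding $a_{ii} > 0$.

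The main obstacle is handling the nonsymmetric convection term without sacrificing positivity, and the key device is choosing the decomposition of $\grad u_2$ to pivot around the diagonal vertex $q_i$; this aligns the cotangent factors in the convection bound with those that already appear in the diffusion lower bound, so that the comparison reduces to an edgewise inequality of the form $3k_\alpha > K_\eta \delta^e_{i\ell}(u_2)$, which is precisely \eqref{eqn:pii-cond}. Note this is a strict inequality (stronger than the weak inequality needed for the off-diagonal bound in Lemma \ref{lem:pij-nonpos}) because we need strict positivity of $a_{ii}$, and it is uniform in the opposite-angle and full-angle constants $\beta_m, \beta_M$, confirming the claim in the lemma's preamble that the diagonal condition is weaker than the off-diagonal one.
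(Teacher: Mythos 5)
Your proof is correct and takes essentially the same approach as the paper: the decisive idea in both is to expand $\grad u_2$ on each element around the diagonal vertex $q_i$ (rather than around $q_j$ as in the off-diagonal estimate), so that the cotangent factors in the convection bound line up with those in the diffusion bound and the comparison reduces edgewise to $3k_\alpha > K_\eta\,\delta^e_{i\ell}(u_2)$. The only difference is bookkeeping — you collect contributions per triangle and then reorganize by edge, whereas the paper writes $\int_{\omega_i} = \tfrac12\sum_{\omega_{ij}\subset\omega_i}\int_{\omega_{ij}}$ and cancels like terms to arrive at the same per-edge sums directly.
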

\begin{proof}
First consider the diffusion term. Summing integrals over each 
$\omega_{ij} \subset \omega_i$ integrates twice over $\omega_i$.
Applying the identity
$\grad \varphi_i \tdot \grad \varphi_i = -\grad \varphi_j \tdot \grad \varphi_i
- \grad \varphi_{ij}^T \tdot \grad \varphi_i$, over each element $T \in \omega_i$
with nodal indices $q_i, q_j, q_{ij}^T$, and combining like terms to integrate 
each product once per element we have
\begin{align}\label{eqn:pii_001}
\int_{\omega_i} \kappa(x,u_1) \grad \varphi_i \tdot \grad \varphi_i \, \d x
& =  \f 1 2 \sum_{\omega_{ij} \subset \omega_i} 
     \int_{\omega_{ij}} \kappa(x,u_1)\grad \varphi_i \tdot \grad \varphi_i \, \d x
\nonumber \\
& = -\sum_{\omega_{ij} \subset \omega_i} 
     \int_{\omega_{ij}} \kappa(x,u_1)\grad \varphi_j \tdot \grad \varphi_i \, \d x
\nonumber \\
& \ge\f{k_\alpha}{2} \sum_{\omega_{ij} \subset \omega_i} 
    (\cot \theta_{ij}^- + \cot \theta_{ij}^+),
\end{align}
where the last inequality follows from \eqref{eqn:A1:elliptic} and Assumption
\ref{A2:mesh}.
Next, consider the nonsymmetric term.
Summing integrals over each $\omega_{ij} \subset \omega_i$ then combining like terms
as above we have
\begin{align}\label{eqn:pii_002}
\int_{\omega_i}b(x)\grad u_2\tdot \grad \varphi_i \varphi_i \, \d x
& = \f 1 2 \sum_{\omega_{ij} \subset \omega_i} \int_{\omega_{ij}} 
  b(x) \grad u_2\tdot \grad \varphi_i \varphi_i \, \d x
\nonumber \\
& = \sum_{\omega_{ij} \subset \omega_i}(u_2(q_j) - u_2(q_i))
    \int_{\omega_{ij}}\grad \varphi_j \tdot \grad \varphi_i b(x) \varphi_i \, \d x
\nonumber \\
& \ge -\f {K_\eta} {6} \sum_{\omega_{ij} \subset \omega_i} |u_2(q_j) - u_2(q_i)| 
    (\cot \theta_{ij}^+ + \cot \theta_{ij}^-),
\end{align}
where the last inequality follows from \eqref{eqn:A1:Keta}, Assumption \ref{A2:mesh},
and the integration of $\varphi_i$ over each element. 
From \eqref{eqn:rcd002} and \eqref{eqn:pii_002} we have
\begin{align}\label{eqn:pii_003}
\int_{\omega_i} \vec b(x) \tdot \grad \varphi_i \varphi_i \, \d x
\ge -\f{K_\eta}{6}\sum_{\omega_{ij} \subset \omega}
\delta_{ij}^e(u_2)(\cot \theta_{ij}^+ + \cot \theta_{ij}^-).
\end{align}

The lowest order term from \eqref{eqn:assemA} satisfies
$\int_{\omega_i} c(x) \varphi_i ^2 \, \d x  \ge 0,$
for $c(x) \ge 0$ as in \eqref{eqn:rcd003} under the condition \eqref{eqn:A1:Geta}.
Putting together \eqref{eqn:pii_001} and \eqref{eqn:pii_003} 
into \eqref{eqn:assemA}, we have under Assumption \ref{A1:elliptic}
\begin{align}\label{eqn:pii_004}
a_{ii} &\ge \f 1 2  \sum_{\omega_{ij} \subset \omega_i} 
(\cot \theta_{ij}^+ + \cot \theta_{ij}^-)
 \left(-\f {K_\eta} 3 \delta_{ij}^e(u_2) + k_\alpha \right)\nonumber\\
 &\ge \f{\beta_m}{2} \left(-\f {K_\eta} 3 \delta_{ij}^e(u_2) + k_\alpha \right),
\end{align}
from which the result follows under condition \eqref{eqn:pii-cond}.
\end{proof}

By the conditions of the previous two lemmas, and the computations of the next lemma,
the matrx $A$ can be seen to have positive row-sums for each index $i$ 
such that vertex $q_i$ neighbors the Dirichlet boundary. 
In the next lemma, we constuct a diagonal matrix $D_\eps'$ for which the positivity
of row-sums of $AD_\eps'$ is extended to indices $j$ such that $q_j$ 
neighbors a vertex which neighbors $\Gamma_D$.  
This is not sufficient to establish the comparison result 
which requires strict diagonal dominance of $AD$ for some $D$. However, it indicates
how to construct a diagonal matrix $D$ with positive diagonal entries such that the 
positivity of row-sums of $AD$ 
can be extended to rows corresponding to interior vertices. We present this lemma first
because it contains the main ideas and estimates, and is simpler in its presentation.  
Lemma \ref{lem:strict-diag-dom} then contains the full
construction of a diagonal matrix $D_\eps$ for which $AD_\eps$ is strictly diagonally 
dominant.
\begin{lemma}\label{lem:diag-dom}
Let Assumption \ref{A1:elliptic} and Assumption \ref{A2:mesh} hold. 
Let $A = (a_{ij})$ be given by \eqref{eqn:assemA}, and let $A^T = \alpha_{ij}$.
Assume the conditions of Lemma \ref{lem:pij-nonpos} and \ref{lem:pii-pos} hold true, and
for some $\bar \eps > 0$ it holds that 
\begin{align}\label{eqn:dd-cond}
\f{K_\eta \beta_M \delta_{\omega_{ij}}(u_2)}{3\beta_m} 
< k_\alpha - \bar \eps.
\end{align}
Let $D_\eps'$ be the diagonal matrix with entries $d_i$ given by
\begin{align}\label{eqn:Ddef}
d_i = \left\{\begin{array}{ll} 1, & ~\text{ if }~ \cQ_i = \overline \cQ_i, \\ 
                             d_\eps < 1, & ~\text{ otherwise. }  \end{array}\right.
\end{align}
Then, as $d_\eps \goto 1$ but $d_\eps < 1$, the matrix $A^TD_\eps'$ is 
diagonally dominant, and has positive row-sums for each index $i$ for which 
$q_i$ neighbors $\Gamma_D$, or has a neighbor that does.
\end{lemma}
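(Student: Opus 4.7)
The plan is to identify the $i$th row sum of $A^{T}D_\eps'$ with the action of the bilinear form underlying~\eqref{eqn:rcd001} on a single discrete test function. Setting $\phi := \sum_{j\in\cQ}d_j\varphi_j\in\cV$, direct computation gives
\[
\sum_{j\in\cQ} a_{ji}\,d_j
= \int_{\omega_i}\!\! \kappa(x,u_1)\nabla\varphi_i\tdot\nabla\phi\,\d x
+ \int_{\omega_i}\!\! b(x)\nabla u_2\tdot\nabla\phi\,\varphi_i\,\d x
+ \int_{\omega_i}\!\! c(x)\phi\,\varphi_i\,\d x.
\]
The partition of unity $\sum_{q_k\in\overline\cQ}\varphi_k\equiv 1$ then gives the splitting
\[
\phi \,=\, 1 - \Phi_D - (1-d_\eps)\psi_I,\qquad
\Phi_D := \!\sum_{q_k\in\Gamma_D}\!\varphi_k,\quad
\psi_I := \!\sum_{\substack{q_j\in\cQ\\ \cQ_j\subsetneq\overline\cQ_j}}\!\varphi_j,
\]
so that $\nabla\phi = -\nabla\Phi_D - (1-d_\eps)\nabla\psi_I$ and the row sum decomposes into a Dirichlet piece, a first-interior-layer piece multiplied by $(1-d_\eps)$, and the manifestly nonnegative reaction term $\int c\phi\varphi_i\,\d x\ge 0$.

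The per-neighbor bounds needed are essentially already present in the proof of Lemma~\ref{lem:pij-nonpos}. For any $q_j\in\overline\cQ_i$---whether $q_j\in\Gamma_D$ or $q_j$ lies in the $\psi_I$-set---the diffusion estimate~\eqref{eqn:pij002a} yields
$-\int\kappa\nabla\varphi_i\tdot\nabla\varphi_j\,\d x \ge \f{k_\alpha}{2}\bigl(\cot\theta_{ij}^+ + \cot\theta_{ij}^-\bigr) \ge \f{k_\alpha\beta_m}{2}$,
while~\eqref{eqn:pij005a} combined with~\eqref{eqn:full-angle-cond} yields $\bigl|\int b\,\nabla u_2\tdot\nabla\varphi_j\,\varphi_i\,\d x\bigr|\le \f{K_\eta\beta_M\delta_{\omega_{ij}}(u_2)}{6}$. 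Under hypothesis~\eqref{eqn:dd-cond} the convection bound is dominated by the diffusion gain by at least $\f{\beta_m\bar\eps}{2}$, so each qualifying neighbor $q_j$ contributes at least $\f{\beta_m\bar\eps}{2}$ to its piece of the row sum, with the gap quantitative and independent of $d_\eps$.

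A case split on the location of $q_i$ finishes the argument. If $\cQ_i=\overline\cQ_i$ and $\cQ_j=\overline\cQ_j$ for every $q_j\in\overline\cQ_i$, then $\Phi_D\equiv 0$ and $\psi_I\equiv 0$ on $\omega_i$, so the row sum collapses to $\int c\,\varphi_i\,\d x\ge 0$, giving weak diagonal dominance. If $q_i$ neighbors $\Gamma_D$, the Dirichlet piece contributes at least $\f{\beta_m\bar\eps}{2}$ per Dirichlet neighbor, and this bound is independent of $d_\eps$, so the row sum is strictly positive. If $q_i$ has no Dirichlet neighbor but some neighbor of $q_i$ lies in the $\psi_I$-set, the first-interior-layer piece contributes at least $(1-d_\eps)\f{\beta_m\bar\eps}{2}$ per such neighbor, which is strictly positive for every $d_\eps<1$. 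The main technical hurdle is exactly this last case: the positive contribution scales as $(1-d_\eps)$, so the per-neighbor bound must be strictly positive and independent of $d_\eps$---and~\eqref{eqn:dd-cond} is precisely what furnishes that uniform gap of size $\f{\beta_m\bar\eps}{2}$.
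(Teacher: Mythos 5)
Your partition-of-unity decomposition $\phi = 1-\Phi_D-(1-d_\eps)\psi_I$ is algebraically equivalent to the paper's rearrangement of $\sum_j d_j a_{ji}$ into a boundary piece plus a $\sum_{j\in\cQ_i}(d_j-d_i)\cJ_{ij}$ piece, but it is cleaner: it avoids the paper's explicit pairwise cancellation of the $\cK_{ij}^T$ terms over the patch $\overline\cQ_i$. The per-neighbor bound $\tfrac{\beta_m\bar\eps}{2}$ you extract from \eqref{eqn:pij002a}, \eqref{eqn:pij005a}, \eqref{eqn:full-angle-cond} and hypothesis \eqref{eqn:dd-cond} is correct. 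However, your case split has a real gap in the case ``$q_i$ neighbors $\Gamma_D$''.

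In that case $q_i$ itself belongs to the $\psi_I$-set, so $\nabla\psi_I$ restricted to $\omega_i$ contains $\nabla\varphi_i$. The first-interior-layer piece therefore includes the self-term
\[
-(1-d_\eps)\left[\int_{\omega_i}\kappa(x,u_1)\,|\nabla\varphi_i|^2\,\d x
 + \int_{\omega_i}\vec b(x)\tdot\nabla\varphi_i\,\varphi_i\,\d x\right],
\]
whose diffusion part is \emph{negative} (of magnitude $\bigo(1-d_\eps)$), and your per-neighbor estimate---which relies on $\nabla\varphi_i\tdot\nabla\varphi_j\le 0$ for $i\ne j$---does not apply to $j=i$. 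More generally, writing $\nabla\psi_I = -\nabla\Phi_D-\sum_{q_j\in\cQ_i\setminus\psi_I}\nabla\varphi_j$ on $\omega_i$ shows $-(1-d_\eps)\int\kappa\nabla\varphi_i\tdot\nabla\psi_I\le 0$ here. Your statement ``the Dirichlet piece is independent of $d_\eps$, so the row sum is strictly positive'' therefore does not follow as written: the Dirichlet piece gives a positive $\bigo(1)$ contribution, but the $\psi_I$ piece gives a negative $\bigo(1-d_\eps)$ contribution that must be absorbed, and this is precisely where the ``as $d_\eps\goto 1$'' provision of the lemma is needed. The paper handles this by bounding $\bigl|\cJ_{ij}(d_j-d_i)\bigr|\le\cJ^U(1-d_\eps)\goto 0$; your proof should do the analogous thing by bounding the whole $\psi_I$ piece by a constant times $(1-d_\eps)$ and then sending $d_\eps\goto 1$. (You correctly identify the $\bigo(1-d_\eps)$-scaling issue, but in the wrong case: for rows where $q_i$ has no Dirichlet neighbor, $\varphi_i\notin\psi_I$ so the per-neighbor bound applies cleanly and the $\psi_I$ piece is \emph{positive}; the delicate sign issue is in the Dirichlet-neighbor case.)
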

The idea behind the construction of $D_\eps'$ relates to the stiffness matrix $S$ for 
the Laplacian.  For any row $i$ such that corresponding vertex $q_i$ has a neighbor
on the Dirichlet boundary, the row-sum of $S$ 
is positive, and otherwise zero. This leaves enough room to scale down the columns 
corresponding to vertices with neighbors on the Dirichlet boundary so that all 
row-sums containing nonzero terms from these columns are also positive.  
The construction of $D_\eps$ so that $A^TD_\eps$ is strictly diagonally 
dominant follows from scaling each column closer to one as its distance from the 
Dirichlet boundary increases, and is addressed Lemma \ref{lem:strict-diag-dom}.
The monotonicity of $A$ follows from the monotonicity of $A^T$.
\begin{proof}
By the established positivity of the diagonal elements, and nonpositivity of the
off-diagonal elements, the row-$i$ requirement for diagonal dominance of 
$A^TD_\eps'$ is
\begin{align}\label{eqn:dd_001}
d_i \alpha_{ii} + \sum_{j = 1, j \ne i}^n d_j \alpha_{ij} \ge 0,
\end{align}
where the inequality must be strict for at least one index $i$.
By slight abuse of notation, let $j \in \cQ_i$ mean index $j$ such that $q_j \in \cQ_i$.
Let $n = \card(\cQ)$, the number of mesh degrees of freedom.
Expanding \eqref{eqn:dd_001} by \eqref{eqn:assemA} and rearranging terms yields
\begin{align}\label{eqn:dd_002}
d_i \alpha_{ii} + \sum_{j = 1, j \ne i}^n d_j \alpha_{ij} & = 
d_i \int_{\omega_i} \kappa(x,u_1) \grad \varphi_i \tdot \grad \varphi_i \, \d x
+ \sum_{j \in \cQ_i} d_j \int_{\omega_{ij}} \kappa(x,u_1) 
\grad \varphi_i \tdot \grad \varphi_j \, \d x
\nonumber \\
& + d_i \int_{\omega_i} \vec b(x) \tdot  \grad \varphi_i \varphi_i \, \d x
+ \sum_{j \in \cQ_i} d_j \int_{\omega_{ij}} \vec b(x) \tdot 
\grad \varphi_j \varphi_i \, \d x
\nonumber \\
& + d_i \int_{\omega_i} c(x) \varphi_i^2 \, \d x
+ \sum_{j \in \cQ_i} d_j \int_{\omega_{ij}} c(x) \varphi_j \varphi_i \, \d x.
\end{align}

Similarly to the calculations of \eqref{eqn:pij002a} and \eqref{eqn:pii_001}
the contribution from the first line of \eqref{eqn:dd_002} is 
\begin{align}\label{eqn:dd_003}
&d_i \int_{\omega_i} \kappa(x,u_1) \grad \varphi_i \tdot \grad \varphi_i \, \d x
+ \sum_{j \in \cQ_i} d_j \int_{\omega_{ij}} \kappa(x,u_1) 
\grad \varphi_i \tdot \grad \varphi_j \, \d x
\nonumber \\
& = -d_i\sum_{j \in \overline \cQ_i\setminus \cQ_i} 
  \int_{\omega_{ij}} \kappa(x,u_1) \grad \varphi_i \tdot \grad \varphi_j \, \d x 
+ \sum_{j \in \cQ_i} (d_j-d_i) 
  \int_{\omega_{ij}} \kappa(x,u_1) \grad \varphi_i \tdot \grad \varphi_j \, \d x. 
\end{align}
It is noted that the contribution from the second term
on the left of \eqref{eqn:dd_002} is
restricted to $j \in \cQ_i$, the mesh degrees of freedom.

Recalling that $\vec b(x) = b(x) \nabla u_{2},$ the contribution from the second line 
of \eqref{eqn:dd_002} is 
\begin{align}\label{eqn:dd_004}
& d_i \int_{\omega_i} \vec b(x) \tdot  \grad \varphi_i \varphi_i \, \d x
+ \sum_{j \in \cQ_i} d_j \int_{\omega_{ij}} \vec b(x) \tdot 
\grad \varphi_j \varphi_i \, \d x
\nonumber \\
& = d_i \sum_{j \in \overline \cQ_i \setminus \cQ_i} 
  (u_2(q_j) - u_2(q_i)) \int_{\omega_{ij}} \grad \varphi_j \cdot \grad \varphi_i
   b(x) \varphi_i \, \d x 
\nonumber \\
& + \sum_{j \in \cQ_i} (d_j - d_i)(u_2(q_i) - u_2(q_j)) \int_{\omega_{ij}}
  \grad \varphi_i \tdot \grad \varphi_j b(x) \varphi_i \, \d x
\nonumber \\
& + \sum_{j \in \cQ_i} d_j \sum_{T \in \omega_{ij}}(u_2(q_{ij}^T) - u_2(q_j)) 
  \int_{T}\grad \varphi_{ij}^T \tdot \grad \varphi_j b(x) \varphi_i \, \d x.
\end{align}
Let $\cK_{ij}^T \coloneqq (u_2(q_{ij}^T) - u_2(q_j)) 
  \int_{T}\grad \varphi_{ij}^T \tdot \grad \varphi_j b(x) \varphi_i \, \d x.$
The last line of \eqref{eqn:dd_004} can be expanded as 
\begin{align}\label{eqn:dd_005}
&  \left( d_i\sum_{j \in \overline \cQ_i}\sum_{T \in \omega_{ij}} \cK_{ij}^T\right)
 + \left(\sum_{j \in  \cQ_i} (d_j - d_i) \sum_{T \in \omega_{ij}} \cK_{ij}^T \right)
 - \left( d_i \sum_{j \in \overline \cQ_i \setminus \cQ_i} \sum_{T \in \omega_{ij}} 
   \cK_{ij}^T \right)
\nonumber \\
& = \left(\sum_{j \in  \cQ_i} (d_j - d_i) \sum_{T \in \omega_{ij}} \cK_{ij}^T \right)
  - \left( d_i \sum_{j \in \overline \cQ_i \setminus \cQ_i} \sum_{T \in \omega_{ij}} 
    \cK_{ij}^T \right),
\end{align}
as the first term in the left of \eqref{eqn:dd_005} is zero because the $\cK_{ij}^T$ terms 
cancel pairwise when summed over the entire patch $\overline \cQ_i$. 

The contribution from the third line of \eqref{eqn:dd_002} can be written as 
\[
\f {d_i}{2} \sum_{j \in \overline \cQ_i \setminus \cQ_i} 
\int_{\omega_{ij}} c(x) \varphi_i^2 \, \d x +  
\sum_{j \in \cQ_i} \int_{\omega_{ij}} c(x) \varphi_i
  \left(\f{d_i}{2}\varphi_i + d_j \varphi_j \right) \d x \ge 0.
\]
This term is clearly nonnegative and need not be further considered.
Recombining the remaining terms of \eqref{eqn:dd_003}, \eqref{eqn:dd_004} and 
\eqref{eqn:dd_005} into \eqref{eqn:dd_002} we have
\begin{align}\label{eqn:dd_006}
&d_i \alpha_{ii} + \sum_{j = 1, j \ne i}^n d_j \alpha_{ij} 
\nonumber \\
& \ge \sum_{j \in \overline \cQ_i \setminus \cQ_i} d_i\Bigg\{ 
     \int_{\omega_{ij}} \left(\kappa(x,u_1) - (u_2(q_j) - u_2(q_i)) b(x) \varphi_i)
    \right) \grad \varphi_i \tdot \grad \varphi_j \, \d x 
\nonumber \\
&  - \sum_{T \in \omega_{ij}} 
  (u_2(q_{ij}^T) - u_2(q_j)) \int_T 
    \grad \varphi_{ij}^T \tdot \grad \varphi_j b(x) \varphi_i \, \d x \Bigg\}
\nonumber \\
& + \sum_{j \in \cQ_i} (d_j - d_i) \Bigg(
  \int_{\omega_{ij}} (\kappa(x,u_1) + 
  (u_2(q_i) - u_2(q_j))b(x) \varphi_i)\grad \varphi_i \tdot \grad \varphi_j \, \d x
\nonumber \\
&  + \sum_{T \in \omega_{ij}} (u_2(q_{ij}^T) - u_2(q_j)) 
  \int_{T}\grad \varphi_{ij}^T \tdot \grad \varphi_j b(x) \varphi_i \, \d x \Bigg).
\end{align}
The last two lines of \eqref{eqn:dd_006} are controlled
by the hypothesis \eqref{eqn:dd-cond},  {\em cf.} \eqref{eqn:pij005}.
In particular
\begin{align}\label{eqn:dd_006a}
\cJ_{ij} & \coloneqq \int_{\omega_{ij}} (\kappa(x,u_1) + 
(u_2(q_i) - u_2(q_j))b(x) \varphi_i)\grad \varphi_i \tdot \grad \varphi_j \, \d x
\nonumber \\
& + \sum_{T \in \omega_{ij}} (u_2(q_{ij}^T) - u_2(q_j)) 
  \int_{T}\grad \varphi_{ij}^T \tdot \grad \varphi_j b(x) \varphi_i \, \d x
\nonumber \\
& \le \f 1 2 \left( -k_\alpha \beta_m 
 + \f{K_\eta \beta_M\delta_{\omega_{ij}}(u_2)}{3} \right) < -\f{\beta_m\bar \eps}{2} < 0.
\end{align}
It is also important to note that taking into consideration the upper bound on 
$\kappa$ given in \eqref{eqn:A1:elliptic}, we have finite numbers 
$\cJ_L, \cJ^U$ for which
\begin{align}\label{eqn:dd_006b}
\f{\beta_m \bar \eps}{2} \le \cJ_L \le |\cJ_{ij}| \le \cJ^U.
\end{align} 
Applying \eqref{eqn:dd_006a}, the angle conditions \eqref{eqn:opp-angle-cond} and
\eqref{eqn:full-angle-cond}, and the bounds on the data as above, 
inequality \eqref{eqn:dd_006} implies
\begin{align}\label{eqn:dd_007}
&d_i \alpha_{ii} + \sum_{j = 1, j \ne i}^n d_j \alpha_{ij}
\nonumber \\ 
& \ge \f 1 2\sum_{j \in \overline \cQ_i \setminus \cQ_i} \left\{ 
    d_i \left(k_\alpha - \f{K_\eta \delta_{ij}^e(u_2)}{3} \right) 
    (\cot \theta_{ij}^+ + \cot \theta_{ij}^-)  
  -d_j \f{K_\eta \delta_{\omega_{ij}}^{(i)}}{3}  
    \sum_{T \in \omega_{ij} }  \cot \theta_i^T  \right\}
\nonumber \\
& +  \sum_{j \in \cQ_i} (d_j - d_i) \cJ_{ij}. 
\end{align}

In the case that $\overline \cQ_i \setminus \cQ_i =  \emptyset$, meaning
vertex $q_i$ has no neighbors on $\Gamma_D$, the
first term on the RHS of \eqref{eqn:dd_007} does not appear, and $d_i = 1$.
Either $d_j = 1$, in which case the second term
on the RHS of \eqref{eqn:dd_007}
is zero, or $d_j = d_\eps < 1$, for some $j \in \cQ_i$, meaning $(d_j - d_i) < 0$, 
and the  second term is positive.

In the case that $\overline \cQ_i \setminus \cQ_i \ne \emptyset$, the 
vertex $q_i$ has at least one neighbor on the Dirichlet boundary, and $d_i = d_\eps$.
The second term of \eqref{eqn:dd_007} is either zero or negative and goes to zero, 
because applying the upper-bound on $|\cJ_{ij}|$ from \eqref{eqn:dd_006b}, we have 
$|\cJ_{ij}(d_j - d_i)| \le \cJ^U (1-d_\eps) \goto 0$ as $d_\eps \goto 1$. 
Moreover, the first term in brackets is strictly positive under condition 
\eqref{eqn:dd-cond}. Specifically, for $d_i = d_\eps$ we have
\begin{align}\label{eqn:dd_008}
d_i a_{ii} + \sum_{j = 1, j \ne i}^n d_j a_{ij} 
 &\ge \f {d_i \beta_m}{2} \sum_{j \in \overline \cQ_i \setminus \cQ_i}
    \left(k_\alpha - \f{K_\eta \beta_M\delta_{ij}(u_2)}{3\beta_m} \right) 
 + \sum_{j \in \cQ_i} (d_j - d_i) \cJ_{ij} 
\nonumber \\
&\ge \f{d_\eps \beta_m \bar \eps}{2} + \sum_{j \in \cQ_i}(1-d_\eps) \cJ^U,
\end{align}
which is clearly positive for any fixed $\bar \eps > 0$ as $d_\eps \goto 1$, minding 
the angle condition \eqref{eqn:full-angle-cond} strictly bounds the number of 
neighbors for any vertex $\cQ_i$, so the sum on the right has a maximum $m$ 
number of terms.

These arguments together show all row-sums are nonnegative; 
and, row-sums are strictly positive for vertices with neighbors on 
$\Gamma_D$, or that have neighbors that neighbor $\Gamma_D$.
As there is at least one Dirichlet node, the conclusion follows.
\end{proof}

We now construct a slightly more complicated diagonal matrix $D_\eps$ 
to establish strict diagonal dominance of $A^TD_\eps$.  
The proof is similar to Lemma \ref{lem:diag-dom}, with some additional 
arguments.  Instead of defining the diagonal elements $d_i$ of $D_\eps$ corresponding
to nodes without neighbors on the Dirichlet boundary to be unity, they are defined 
as an increasing sequence based on their distance from $\Gamma_D$.  This
prevents the case of the zero row-sum corresponding to vertices sufficiently far 
from the boundary as in Lemma \ref{lem:diag-dom}. 
First, we require a notion of distance from the boundary.

\begin{definition}\label{def:dist}
Let $p_i$ denote the length of the shortest path to a neighborhood of 
the Dirichlet boundary from vertex $q_i$.  
In particular, if $\overline \cQ_i \setminus \cQ_i \ne \emptyset$, then $p_i = 0$.  
Otherwise, $p_i$ is defined to be the least number of edges traversed
between $q_i$ and any vertex $q_j$ with $p_j = 0$.
\end{definition}

\begin{lemma}\label{lem:strict-diag-dom}
Let Assumption \ref{A1:elliptic} and Assumption \ref{A2:mesh} hold. 
Let $A = (a_{ij})$ be given by \eqref{eqn:assemA}, and let $A^T = \alpha_{ij}$.
Assume the conditions of Lemma \ref{lem:pij-nonpos} and \ref{lem:pii-pos} hold true, and
for some $\bar \eps > 0$ it holds that 
\begin{align}\label{eqn:s-dd-cond}
\f{K_\eta \beta_M\delta_{\omega_{ij}}(u_2)}{3\beta_m} 
< k_\alpha - \bar\eps,
\end{align}
as in Lemma \ref{lem:diag-dom}.
Let $D_\eps$ be the diagonal matrix with entries $d_i$ given by
\begin{align}\label{eqn:epsDdef}
d_i = 1 - \eps_{p_i}, \quad 
\eps_{p_i} = \eps_{p_i-1} - r^{p_i-1}\delta_0, ~p_i \ge 1,
\end{align}
for $p_i$ given by Definition \ref{def:dist},
fixed $0 < \eps_0 < 1$, and $0 < r, \delta_0 < 1$, to be defined below.
Then, matrix $A^TD_\eps$ is strictly diagonally dominant, and
the condition \eqref{eqn:s-dd-cond} relaxes to the condition 
\eqref{eqn:pij-cond} for $A$ to be a $Z$-matrix, as $\bar \eps \goto 0$. 
\end{lemma}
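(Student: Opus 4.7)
The plan is to follow the framework of Lemma \ref{lem:diag-dom} almost verbatim through the expansion \eqref{eqn:dd_002}--\eqref{eqn:dd_007}, which did not use the specific values of the diagonal entries of the scaling matrix. Dropping the manifestly nonnegative reaction contribution, what remains to show is that for every vertex $q_i$ the quantity
$\sum_{j \in \overline\cQ_i \setminus \cQ_i}(\text{boundary term}) + \sum_{j \in \cQ_i}(d_j - d_i)\cJ_{ij}$
is strictly positive, where $\cJ_{ij}$ is defined in \eqref{eqn:dd_006a} and satisfies $-\cJ^U \le \cJ_{ij} \le -\cJ_L < 0$ via \eqref{eqn:dd_006b} under the strict hypothesis \eqref{eqn:s-dd-cond}.

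The key point behind the recursive construction \eqref{eqn:epsDdef} is that adjacent vertices can differ in path-distance by at most one, so for each $q_j \in \cQ_i$ the difference $d_j - d_i = \eps_{p_i} - \eps_{p_j}$ takes exactly one of three values: $0$ when $p_j = p_i$; $-r^{p_i-1}\delta_0$ when $p_j = p_i-1$; and $+r^{p_i}\delta_0$ when $p_j = p_i+1$. For an interior vertex with $p_i \ge 1$, Definition \ref{def:dist} forces at least one neighbor $q_j$ to realize $p_j = p_i - 1$, contributing at least $r^{p_i-1}\delta_0 \cJ_L$ to the row sum. Letting $m$ be the uniform upper bound on the valence of any vertex guaranteed by the smallest-angle condition \eqref{eqn:full-angle-cond}, the at most $m-1$ remaining interior neighbors with $p_j = p_i+1$ contribute at least $-(m-1) r^{p_i} \delta_0 \cJ^U$, while those with $p_j = p_i$ contribute nothing. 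The net is bounded below by $r^{p_i-1}\delta_0 (\cJ_L - (m-1) r \cJ^U)$, which is strictly positive provided $r < \cJ_L / ((m-1) \cJ^U)$.

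For a boundary-adjacent row ($p_i = 0$), the boundary term is positive and, by the very calculation that yielded \eqref{eqn:dd_008} applied with the stronger hypothesis \eqref{eqn:s-dd-cond}, bounded below by $\tfrac{d_0 \beta_m \bar\eps}{2}$ per Dirichlet neighbor. The interior neighbors (necessarily with $p_j \in \{0,1\}$) contribute at worst $-(m-1)\delta_0 \cJ^U$, so choosing $\delta_0$ small enough, for instance $\delta_0 < d_0 \beta_m \bar\eps / (2(m-1)\cJ^U)$, yields strict positivity here as well. The parameters can be chosen consistently: fix $\eps_0 \in (0,1)$, then select $r \in (0,1)$ to meet the interior-row threshold, and finally $\delta_0 \in (0,1)$ small enough both to meet the boundary-row threshold and to ensure $\delta_0/(1-r) < \eps_0$, which keeps $d_i = 1 - \eps_{p_i} \in (0,1)$ uniformly in $p_i$.

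The main obstacle is the coupling of the two parameter choices through $\bar\eps$: the lower bound $\cJ_L$ from \eqref{eqn:dd_006b} scales like $\beta_m \bar\eps/2$, so as $\bar\eps \to 0$ the threshold on $r$ shrinks to zero, which in turn forces $\delta_0 \to 0$, and $D_\eps$ collapses to the identity while $A^T D_\eps$ reduces to $A^T$. Strict diagonal dominance is lost in that limit, but the $Z$-matrix structure of $A$ persists under the weaker condition \eqref{eqn:pij-cond}, which is exactly the content of the final clause of the lemma; for every fixed $\bar\eps > 0$, Theorem \ref{thm:FiPe} then delivers the monotonicity of $A^T$, and hence of $A$.
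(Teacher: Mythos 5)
Your proposal matches the paper's argument essentially verbatim: the same decomposition of row sums by path-distance classes $p_j \in \{p_i - 1,\, p_i,\, p_i + 1\}$ with the observation that at least one neighbor satisfies $p_j = p_i - 1$, the same use of the bounds $\cJ_L \le |\cJ_{ij}| \le \cJ^U$, the same choice of $r$ from balancing the interior row-sum and of $\delta_0$ from balancing the boundary row-sum, and the same reduction to \eqref{eqn:pij-cond} as $\bar\eps \to 0$. The only differences are cosmetic constant choices (you use $m-1$ where the paper keeps $m$ for slack) and that you invoke the $d$-generic expansion \eqref{eqn:dd_007} directly rather than re-deriving it for the new $D_\eps$ as the paper does.
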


\begin{proof}
First it is noted that the sequence $\{ \eps_{j}\}$ from \eqref{eqn:Ddef}
is a strictly decreasing sequence.  By summing the geometric terms in 
\eqref{eqn:epsDdef}, we also see the sequence $\{ \eps_{j}\}$ is strictly positive
if $\eps_0 > \delta_0/(1-r)$, which will be assured as $\bar \eps \goto 0$ for fixed
$\eps_0$. As a result, 
the coefficients $d_i$ are ordered by the distance of each $q_i$ to the boundary 
with respect to Definition \ref{def:dist}, and 
$d_i \goto 1 +  \delta_0/(1-r) - \eps_0,$ for increasing $p_i$.

Similarly to \eqref{eqn:dd_001}, we require for each row $i$ of 
the product $AD_\eps$ that
\begin{align}\label{eqn:sdd_001}
d_i \alpha_{ii} + \sum_{j = 1, j \ne i}^n d_j \alpha_{ij} > 0,
\end{align}
where the row-sum is given explicity by \eqref{eqn:dd_002}. Each line of 
\eqref{eqn:dd_002} is now considered with respect to the membership of each
$q_j \in \overline \cQ_i$ in one of three sets.

Define the sets 
\begin{align}\label{def:Qp}
\cQ_i^p \coloneqq \{ q_j \in \overline \cQ_i ~\rest~ p_j = p\}.
\end{align}
We also denote $\cQ_i^{-1} = \overline \cQ_i \setminus \cQ_i$. 
A first key observation for the following analysis is for each vertex 
$q_j \in \overline \cQ_i$, 
$q_j$ is in exactly one of $\cQ_i^{p_i-1}, \cQ_i^{p_i}, \cQ_i^{p_i + 1}$.
A second key observation is at least one $q_j$ in $\overline \cQ_i$ is in 
$\cQ_i^{p_i-1}$, meaning at least one neighbor of $q_i$ is closer in the sense of 
Definition \ref{def:Qp} to $\Gamma_D$.
We now partition the terms of \eqref{eqn:dd_002} into sums over each of these three
sets. As before, the contribution to the total sum from the terms in the last
line of \eqref{eqn:dd_002} is strictly nonnegative, so we only consider the terms
in the first two lines.

Again, let $j \in \cQ_i^p$ mean index $j$ for which $q_j \in \cQ_i^p$.  
For simplicity of notation, let $p$ denote $p_i$.
If $p = 0$, meaning vertex $q_i$ neighbors the Dirichlet boundary, 
the contribution from the first line on the RHS of \eqref{eqn:dd_002} is then
\begin{align}\label{eqn:sdd_003p0}
 -d_i \sum_{j \in \cQ_i^{-1}} \int_{\omega_{ij}} \kappa(x,u_1) 
    \grad \varphi_i \grad \varphi_j \, \d x +
  \sum_{j\in \cQ_i} (d_j - d_i)\int_{\omega_{ij}}
  \kappa(x,u_1) \grad \varphi_i \grad \varphi_j \, \d x.
\end{align}
For $p > 0$, meaning vertices $q_i$ without neighbors on $\Gamma_D$, we have
\begin{align}\label{eqn:sdd_003}
  \sum_{j\in \cQ_i} (d_j - d_i)\int_{\omega_{ij}}
  \kappa(x,u_1) \grad \varphi_i \grad \varphi_j \, \d x.
\end{align}

For $p = 0$, the contribution from the second line of \eqref{eqn:dd_002} can be written
as 
\begin{align}\label{eqn:sdd_004p0}
&  d_i \sum_{j \in \cQ_i^{-1}} 
  (u_2(q_j) - u_2(q_i)) \int_{\omega_{ij}} \grad \varphi_j \cdot \grad \varphi_i
   b(x) \varphi_i \, \d x 
\nonumber \\
& + \sum_{j \in \cQ_i} (d_j - d_i)(u_2(q_i) - u_2(q_j)) \int_{\omega_{ij}}
  \grad \varphi_i \tdot \grad \varphi_j b(x) \varphi_i \, \d x
\nonumber \\
& + \sum_{j \in \cQ_i} d_j \sum_{T \in \omega_{ij}} (u_2(q_{ij}^T) - u_2(q_j)) 
    \int_{T}\grad \varphi_{ij}^T \tdot \grad \varphi_j b(x) \varphi_i \, \d x.
\end{align}

As in \eqref{eqn:dd_005}, the third line of \eqref{eqn:sdd_004p0} can be expanded as
\begin{align}\label{eqn:sdd_005p0}
& \sum_{j \in  \cQ_i} (d_j - d_i) \sum_{T \in \omega_{ij}}
(u_2(q_{ij}^T) - u_2(q_j)) 
  \int_{T}\grad \varphi_{ij}^T \tdot \grad \varphi_j b(x) \varphi_i \, \d x
\nonumber \\
&  - d_i \sum_{j \in \cQ_i^{-1}} \sum_{T \in \omega_{ij}} (u_2(q_{ij}^T) - u_2(q_j)) 
  \int_{T}\grad \varphi_{ij}^T \tdot \grad \varphi_j b(x) \varphi_i \, \d x.
\end{align}

For $p > 0$, the contribution from the second line of \eqref{eqn:dd_002} can be
written as 
\begin{align}\label{eqn:sdd_004}
&  \sum_{j \in \cQ_i} (d_j - d_i)(u_2(q_i) - u_2(q_j)) \int_{\omega_{ij}}
  \grad \varphi_i \tdot \grad \varphi_j b(x) \varphi_i \, \d x
\nonumber \\
& + \sum_{j \in \cQ_i} d_j \sum_{T \in \omega_{ij}}(u_2(q_{ij}^T) - u_2(q_j)) 
    \int_{T}\grad \varphi_{ij}^T \tdot \grad \varphi_j b(x) \varphi_i \, \d x.
\end{align}
As in \eqref{eqn:dd_005} and \eqref{eqn:sdd_005p0}, the second line of 
\eqref{eqn:sdd_004} can then be written as 
\begin{align}\label{eqn:sdd_005}
& \sum_{j \in  \cQ_i} 
  (d_j - d_i) \sum_{T \in \omega_{ij}}(u_2(q_{ij}^T) - u_2(q_j)) 
  \int_{T}\grad \varphi_{ij}^T \tdot \grad \varphi_j b(x) \varphi_i \, \d x.
\end{align}

For the case $p = 0$, applying expansions \eqref{eqn:sdd_003p0}, \eqref{eqn:sdd_004p0} and
\eqref{eqn:sdd_005p0} to \eqref{eqn:dd_002} we have
\begin{align}\label{eqn:sdd_006p0}
&d_i \alpha_{ii} + \sum_{j = 1, j \ne i} d_j \alpha_{ij} 
\nonumber \\
& \ge \sum_{j \in \cQ_i^{-1}} \Bigg\{ 
    -d_i\int_{\omega_{ij}} \left(\kappa(x,u_1) - (u_2(q_j) - u_2(q_i)) b(x) \varphi_i)
    \right) \grad \varphi_i \tdot \grad \varphi_j \, \d x 
\nonumber \\
& - d_j\sum_{T \in \omega_{ij} } (u_2(q_{ij}^T) - u_2(q_j)) \int_T b(x) \varphi_i 
    \grad \varphi_{ij}^T \tdot \grad \varphi_j \, \d x \Bigg\}
+  \sum_{j \in \cQ_i} (d_j - d_i)\cJ_{ij}, 
\end{align}
with $\cJ_{ij}< 0$ given by \eqref{eqn:dd_006a}.
The strict positivity of \eqref{eqn:sdd_006p0} follows as in the previous lemma, 
up to the last term of \eqref{eqn:sdd_006p0}.
To control the additional term, 
let $m$ be the maximum number of neighbors of any given vertex. A fixed
maximum $m$ is implied by the smallest-angle condition \eqref{eqn:full-angle-cond}.
Minding $\{\eps_j\}$ is a decreasing sequence,
the last term in \eqref{eqn:sdd_006p0} is bounded as
\begin{align}\label{eqn:sdd_007p0}
\sum_{j \in \cQ_i}(d_j - d_i) \cJ_{ij} 
 = \sum_{j \in \cQ_i^1}(\eps_0 - \eps_1) \cJ_{ij}
 \ge -(m-1)(\eps_0 - \eps_1) \cJ^U.
\end{align}
The sum over $\cQ_i^{-1}$ contains at least one term, and similarly to
\eqref{eqn:dd_008}, the estimates \eqref{eqn:sdd_006p0} and \eqref{eqn:sdd_007p0} with
the condition \eqref{eqn:s-dd-cond}, and $d_i = 1-\eps_0$, imply
\begin{align}\label{eqn:dd_008p0}
d_i \alpha_{ii} + \sum_{j \ne i} d_j \alpha_{ij} 
 &\ge \f {(1 - \eps_0)\beta_m}{2} \sum_{j \in \cQ_i^{-1}}
    \left(k_\alpha - \f{K_\eta \beta_M\delta_{ij}(u_2)}{3\beta_m} \right) 
 + \sum_{j \in \cQ_i^1} (d_j - d_i) \cJ_{ij} 
\nonumber \\
&\ge \f{(1 - \eps_0)\bar \eps \beta_m}{2} - (m-1)(\eps_0 - \eps_1) \cJ^U.
\end{align}
From \eqref{eqn:epsDdef}, $\eps_0 - \eps_1 = \delta_0$, so setting 
\begin{align}\label{eqn:sdd_008p0a}
\delta_0 \coloneqq \f{\bar \eps \beta_m(1 - \eps_0)}{2 m\cJ^U},
\end{align}
forces the row-sum in \eqref{eqn:dd_008p0} to be strictly positive for the case $p = 0$.

For the case $p > 0$, applying expansions \eqref{eqn:sdd_003}, \eqref{eqn:sdd_004}
and \eqref{eqn:sdd_005} to \eqref{eqn:dd_002}, and $\cJ_{ij}< 0$ from 
\eqref{eqn:dd_006a}, we have
\begin{align}\label{eqn:sdd_006}
d_i \alpha_{ii} + \sum_{j = 1, j \ne i} d_j \alpha_{ij} 
& = \sum_{j \in \cQ_i^{p-1}} (d_j - d_i)\cJ_{ij} 
  +  \sum_{j \in \cQ_i^{p+1}} (d_j - d_i)\cJ_{ij}
\nonumber \\ 
& = \sum_{j \in \cQ_i^{p-1}} (\eps_{p} - \eps_{p-1})\cJ_{ij} 
  +  \sum_{j \in \cQ_i^{p+1}} (\eps_p - \eps_{p+1})\cJ_{ij}
\nonumber \\
& \ge  (\eps_{p-1} - \eps_{p}) \cJ_L
  -  (m-1) (\eps_p - \eps_{p+1}) \cJ^U,
\end{align}
where the first sum must be nonempty because at least one vertex $q_j \in \cQ_i$ must
be closer to the boundary than $q_i$.
From the construction \eqref{eqn:epsDdef}, \eqref{eqn:sdd_006} then implies
\begin{align}\label{eqn:sdd_007}
d_i \alpha_{ii} + \sum_{j = 1, j \ne i} d_j \alpha_{ij} 
& \ge  r^{p-1}\delta_0 \cJ_L
  -  (m-1)r^p\delta_0 \cJ^U> 0,
\end{align}
for $r < \cJ_L/((m-1) \cJ^U)$.
In particular, inequality \eqref{eqn:sdd_007} is satisfied for
\begin{align}\label{eqn:sdd_008}
r \coloneqq \f{ \cJ_L}{m \cJ^U} < 1.
\end{align}
It is finally noted for the sequence $\{\eps_i\}$ given  by \eqref{eqn:epsDdef}, 
\[
\eps_i > \eps_0 - \f{\delta_0}{1-r} 
= \eps_0 \left(1 + \f{\bar \eps \beta_m}{2(m\cJ^U - \cJ_L)} \right) - 
\f{\bar \eps \beta_m}{2(m\cJ^U - \cJ_L)} > 0,
\]
for $\bar \eps$ small enough.

These arguments together show the matrix $A^TD_\eps$ is strictly diagonally dominant, for 
diagonal matrix $D_\eps$ defined by \eqref{eqn:epsDdef} with 
$\delta_0$ given by \eqref{eqn:sdd_008p0a} and $r$ given by
\eqref{eqn:sdd_008}.  The remainder of the result follows by sending $\bar \eps \goto 0$.
\end{proof}

We summarize our results in a corollary.
\begin{corollary}Assume satisfaction of the hypotheses and conditions 
of Lemma \ref{lem:strict-diag-dom}.  
Then $A = (a_{ij})$ given by \eqref{eqn:assemA} is monotone. 
Moreover if the conditions given on the supersolution $u_2$ are satisfied by any 
solution $u \in \cV$ to \eqref{eqn:QLPDEdiscrete}, the solution is unique.
\end{corollary}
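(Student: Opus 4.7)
The plan is to assemble the conclusions of Lemma \ref{lem:pij-nonpos}, Lemma \ref{lem:pii-pos}, and Lemma \ref{lem:strict-diag-dom} into a single invocation of Theorem \ref{thm:FiPe}, and then to close the uniqueness statement with Theorem \ref{thm:comparison}. First, under the hypotheses inherited from Lemma \ref{lem:strict-diag-dom}, Lemma \ref{lem:pij-nonpos} certifies that $A$ has nonpositive off-diagonal entries, so $A$ is a $Z$-matrix in the sense of Definition \ref{def:Zmatrix}; since transposition preserves the off-diagonal sign pattern, $A^T$ is also a $Z$-matrix. Lemma \ref{lem:pii-pos} makes the diagonal entries strictly positive, upgrading both $A$ and $A^T$ to $L$-matrices.

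Next, I would apply Lemma \ref{lem:strict-diag-dom}, which supplies a diagonal matrix $D_\eps$ with strictly positive diagonal entries such that $A^T D_\eps$ is strictly diagonally dominant. Feeding this into Theorem \ref{thm:FiPe}, with $A^T$ in the role of the $Z$-matrix, yields that $A^T$ is monotone. By Proposition \ref{prop:monotone}, $A^T$ is invertible with $(A^T)^{-1} \ge 0$, and since $A^{-1} = \bigl((A^T)^{-1}\bigr)^T \ge 0$, a second application of Proposition \ref{prop:monotone} gives that $A$ itself is monotone. This completes the first assertion.

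For the uniqueness statement, I would take two solutions $u_1, u_2 \in \cV$ of \eqref{eqn:QLPDEdiscrete} corresponding to the same right-hand side $f$, and suppose that the hypotheses on the ``supersolution'' (that is, the bounds on $\delta_{\omega_{ij}}(\cdot)$ appearing in \eqref{eqn:pij-cond}, \eqref{eqn:pii-cond}, and \eqref{eqn:s-dd-cond}) are satisfied by each of them. Invoking Theorem \ref{thm:comparison} with $u_2$ as the supersolution yields $u_1 \le u_2$; swapping the roles of $u_1$ and $u_2$ and invoking the theorem again yields $u_2 \le u_1$. Hence $u_1 = u_2$, establishing uniqueness.

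The only real subtlety is the passage from the strict diagonal dominance of $A^T D_\eps$ to the monotonicity of $A$ rather than of $A^T$; this is handled by noting that the nonnegativity of the inverse is preserved under transposition, so that monotonicity of $A^T$ transfers directly to $A$. All other pieces are immediate consequences of the preceding lemmas and the statements in Section \ref{sec:prelim}; no additional estimates on the mesh or on the problem data are required beyond those already imposed in Lemma \ref{lem:strict-diag-dom}.
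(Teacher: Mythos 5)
Your proposal follows essentially the same route as the paper: invoke Lemmas~\ref{lem:pij-nonpos}, \ref{lem:pii-pos}, and \ref{lem:strict-diag-dom} to qualify $A^T$ for Theorem~\ref{thm:FiPe}, transfer monotonicity from $A^T$ to $A$ via transposition of the nonnegative inverse, and close uniqueness with Theorem~\ref{thm:comparison}. The paper's proof is much terser, but you have correctly filled in the implicit steps, including the observation that $(A^T)^{-1}=(A^{-1})^T\ge 0$ justifies the passage from $A^T$ to $A$.
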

\begin{proof}
Apply the conclusions of Lemma \ref{lem:strict-diag-dom} to Theorem \ref{thm:FiPe} to
establish the monotonicity of $A^T$ which directly implies the monotonicity of $A$. 
The second conclusion follows by Theorem \ref{thm:comparison}.
\end{proof}

As a final remark, the conditions given in Lemma \ref{lem:strict-diag-dom} which
imply the comparison theorem and uniqueness of the solution, improve the conditions
found in previous work by the authors.
\begin{remark}
To ilustrate this, consider an equilateral mesh.
Then the minimum ratio of sines is equal to one, and the cosine of each angle is 
$c_T = 1/2$.  Then the condition for unqiueness found in \cite[Theorem 3.4]{PoZh17a} 
for the 2D case reduces to 
\[
\delta_T(u) < \f {3 k_\alpha}{14 K_\eta}.
\]
To put the result in the current notation, 
the Lipschitz constant $L_0$ is taken as $K_\eta$.
The conditions found in this investigation
for the same problem \eqref{eqn:QLPDEstrong} without the lower order term on an 
equilateral mesh are found by noting $\beta_m = 1/\sqrt 3$, and $\beta_M = 4/\sqrt 3$.
Then as $\eps \goto 0$, the requirement is
\[
\delta_T(u) < \f{3 k_\alpha}{4 K_\eta}, 
\]
which improves the constant by a factor of $7/2$.
\end{remark}
\section{Conclusion}
In this article, we established a discrete comparison theorem for 
\eqref{eqn:QLPDEstrong}, a quasilinear PDE with a solution-dependent lower order term.
We established sufficient local and global conditions for the monotonicity of the 
assembled coefficient matrix for the PDE corresponding to the difference of two 
solutions. The monotonicity then implies uniqueness of the finite element solution 
under the given conditions.
This argument was seen to relax the angle conditions to allow some right triangles in
the mesh, so long as the sum of angles opposite each edge remains bounded below $\pi$.
Considering the elements of the assembled matrix rather than the integral over 
each individual element further allows an improved local condition on the 
maximum difference between neighboring nodal values.  As in previous work,
we find the mesh should be globally fine if the PDE contains a lower-order 
solution-dependent nonlinearity.
Otherwise, the mesh is required to be fine where the gradient of the solution is steep.


\bibliographystyle{abbrv}
\bibliography{refsTRN,M}


\vspace*{0.5cm}

\end{document}